\documentclass[11pt,oneside,english,a4paper]{amsart}
\usepackage[T1]{fontenc}
\usepackage[utf8]{inputenc}
\usepackage[dvipsnames]{xcolor}
\usepackage[yyyymmdd,hhmmss]{datetime}
\usepackage[numbers,sort,compress]{natbib}
\usepackage{amstext,amsthm,amssymb}
\usepackage{thmtools}
\usepackage{xparse,mathrsfs,mathtools}
\usepackage[english]{babel}
\usepackage[
	unicode=true,
	pdfusetitle, pdfdisplaydoctitle=true,
	pdfpagemode=UseOutlines,
	bookmarks=true, bookmarksnumbered=false, bookmarksopen=true, bookmarksopenlevel=2,
	breaklinks=true,
	pdfencoding=unicode, psdextra,
	pdfcreator={},
	pdfborder={0 0 0}
]{hyperref}
\usepackage{lmodern,microtype}
\usepackage[a4paper]{geometry}
\usepackage[capitalise,nameinlink]{cleveref}
\usepackage[cal=dutchcal,bb=ams,scr=boondoxo]{mathalfa}
\usepackage[inline]{enumitem}
\usepackage{subcaption}
\usepackage{zref-clever}
\zcsetup{hyperref=true, nameinlink=true, comp, cap, nosort}
\usepackage{tikz}
\usepackage{pgfplots}
\usetikzlibrary{backgrounds,calc,cd,spy}
\pgfdeclarelayer{fg}
\pgfsetlayers{background,main,fg}
\pgfplotsset{compat=1.18}

\newtheorem{thm}{Theorem}[section]
\AddToHook{env/thm/begin}{\zcsetup{countertype={thm=theorem}}}

\newtheorem{lem}[thm]{Lemma}
\AddToHook{env/lem/begin}{\zcsetup{countertype={thm=lem}}}
\zcRefTypeSetup{lem}{
	Name-sg = {Lemma},
	name-sg = {lemma},
	Name-pl = {Lemmas},
	name-pl = {lemmas},
}

\newtheorem{cor}[thm]{Corollary}
\AddToHook{env/cor/begin}{\zcsetup{countertype={thm=cor}}}
\zcRefTypeSetup{cor}{
	Name-sg = {Corollary},
	name-sg = {corollary},
	Name-pl = {Corollarys},
	name-pl = {corollarys},
}

\newtheorem{prop}[thm]{Proposition}
\AddToHook{env/prop/begin}{\zcsetup{countertype={thm=prop}}}
\zcRefTypeSetup{prop}{
	Name-sg = {Proposition},
	name-sg = {proposition},
	Name-pl = {Propositions},
	name-pl = {propositions},
}

\newtheorem{opprob}[thm]{Open~problem}
\AddToHook{env/opprob/begin}{\zcsetup{countertype={thm=open-problem}}}

\newtheorem{problem}[thm]{Problem}
\AddToHook{env/problem/begin}{\zcsetup{countertype={thm=problem}}}
\zcRefTypeSetup{problem}{
	Name-sg = {Problem},
	name-sg = {problem},
	Name-pl = {Problems},
	name-pl = {problems},
}

\theoremstyle{remark}

\newtheorem{rem}[thm]{Remark}
\AddToHook{env/rem/begin}{\zcsetup{countertype={thm=rem}}}
\zcRefTypeSetup{rem}{
	Name-sg = {Remark},
	name-sg = {remark},
	Name-pl = {Remarks},
	name-pl = {remarks},
}

\newtheorem{example}[thm]{Example}
\AddToHook{env/example/begin}{\zcsetup{countertype={thm=example}}}

\newtheorem*{rem*}{Remark}
\AddToHook{env/rem*/begin}{\zcsetup{countertype={thm=remark}}}

\newtheorem*{rems*}{Remarks}
\AddToHook{env/rems*/begin}{\zcsetup{countertype={thm=remarks}}}

\zcRefTypeSetup{section}{
	Name-sg = {\S},
	name-sg = {\S},
	Name-pl = {\S\S},
	name-pl = {\S\S},
	namesep = {\,},
}
\zcRefTypeSetup{equation}{
	Name-sg-ab = {},
	name-sg-ab = {},
	Name-pl-ab = {},
	name-pl-ab = {},
	namesep = {},
	abbrev,
}
\zcRefTypeSetup{item}{
	Name-sg-ab = {},
	name-sg-ab = {},
	Name-pl-ab = {},
	name-pl-ab = {},
	namesep = {},
	abbrev,
}

\hypersetup{colorlinks=false, pdfborder={0 0 0}, pdfborderstyle={/S/U/W 0}}

\newcommand{\I}{\mathrm{i}}
\newcommand{\CC}{\mathbb{C}}
\newcommand{\DD}{\mathbb{D}}
\newcommand{\dd}[1]{\mathop{\mathrm{d}#1}}
\newcommand{\Ev}[2][X]{ \mathop{\_}(#2) }
\DeclarePairedDelimiter\inner{\langle}{\rangle}
\DeclarePairedDelimiter\parentheses{\lparen}{\rparen}
\DeclarePairedDelimiter\braces{\lbrace}{\rbrace}
\DeclarePairedDelimiter\norm{\lVert}{\rVert}
\DeclarePairedDelimiter\abs{\lvert}{\rvert}
\newcommand{\evalbar}{\biggr\rvert}
\NewDocumentCommand\set{ s o m o }{%
	\IfBooleanTF{#1}{\IfNoValueTF{#4}{\braces*{#3}}{\braces*{\,#3:#4\,}}}{%
	\IfNoValueTF{#2}{\IfNoValueTF{#4}{\braces{#3}}{\braces{\,#3:#4\,}}}{%
	\IfNoValueTF{#4}{\braces[#2]{#3}}{\braces[#2]{\,#3:#4\,}}}}%
}
\newcommand{\myTVect}[2][{\boldsymbol{t}}]{ \boxed{\qquad \strut \mathclap{\smash{#2#1^n}} \qquad} }%

\numberwithin{equation}{section}

\newif\ifshowfigures
\showfigurestrue

\usepackage{xifthen,ifthen}
\makeatletter
\newcounter{@ToDo}
\newcommand{\todo@helper}[1]{%
	({\color{blue}TODO~\arabic{@ToDo}: {#1\@addpunct{.}}})%
}
\newcommand{\todo}[1]{\stepcounter{@ToDo}%
	\relax\ifmmode\text{\todo@helper{#1}}%
	\else\todo@helper{#1}\fi%
}
\makeatother

\title[Shapiro's problem]{Shapiro's problem on polynomials with large partial sums of coefficients}
\date{\today{}}

\subjclass[2020]{
	30C10, 
	30C15, 
	41A10, 
	41A05. 
}
\keywords{Polynomials with large partial sums, extremal problem, operator norm, Eneström--Kakeya theorem}

\author{Marc~Technau}
\address{%
	Marc~Technau\\%
	Institut für Analysis und Zahlentheorie\\%
	Graz University of Technology\\%
	Kopernikusgasse~24/II\\%
	8010~Graz\\%
	Austria}
\email{mtechnau@math.tugraz.at}
\thanks{The author is funded by the \emph{Austrian Science Fund~(FWF)}, project \emph{`Prime divisors of polynomials, spin chains, and non-residues'}; grant DOI: \href{http://doi.org/10.55776/PAT4579123}{10.55776/PAT4579123}.}

\begin{document}
\begin{abstract}
	Given a polynomial $\sum_\nu a_\nu X^\nu$ of degree $<d$, bounded by one on the unit disk, how large can $\abs{a_0+a_1+\ldots+a_n}$ ($n<d$) get?

	This question dates back at least to the~1952 thesis work of H.~S.~Shapiro\nocite{shapiro1952}.
	In~1978, D.~J.~Newman\nocite{newman1978} gave an \emph{exact} answer for $d=2(n+1)$, but there does not seem to have been further progress on the question since.
	We study variations on this theme, obtaining exact answers for some related coefficient sums, and answer the original question in an asymptotic sense, provided that $n$ is `not too large' in terms of $d$.
	The latter is achieved via a `quantitative' Eneström--Kakeya\nocite{enestrom1893,kakeya1912} theorem, while the former is based on certain identities for carefully selected Lagrange interpolators.
	
	From the interpolation approach we also obtain a general inequality for coefficient sums $\abs{ t_0 a_0 + \ldots + t_{d-1} a_{d-1} }$ for \emph{arbitrary} complex numbers $t_0,\ldots,t_{d-1}$.
	This inequality fails to be sharp in general, yet it is in some cases and also yields non-trivial bounds for Shapiro's problem for some choices of $n$ and $d$.
\end{abstract}
\maketitle

\section{Introduction}

For any function $f$ on the complex unit disk $\DD$, let $\norm{f}_\infty = \sup\set{ \abs{f(z)} }[ z\in\DD ]$.
In his~1952 PhD thesis, H.~S.~Shapiro~\cite[§\,2.8]{shapiro1952} studied the following problem (\emph{cf}.\ also his Master's thesis~\cite{shapiro1951} with the same title):
\begin{problem}[Shapiro, 1952]\label{prob:Shapiro}
	For integers $d>n\geq 0$, determine the maximum $\mathscr{M}_{n,d}$ of
	\begin{equation}
		\label{eq:FunctionalAbsValue}
		\abs{ a_0+a_1+\ldots+a_n },
	\end{equation}
	over all polynomials $f = a_0 + a_1 X + \ldots + a_{d-1} X^{d-1} \in \CC[X]$ of degree strictly less than $d$ with $\norm{f}_\infty \leq 1$.
\end{problem}
Clearly, \zcref{prob:Shapiro} may be viewed as determining the \emph{operator norm} of a certain linear functional (here the underlying space being $\CC[X]^{<d}$, the space of polynomials of degree $<d$).
This relates the present problem to a vast family of problems in functional analysis and adjacent fields, the scope of which we cannot possibly frame here in a just manner.

More concretely, the motivation for Shapiro's question seems to have been (at least in part) a result of Landau, who had determined the maximum of~\zcref{eq:FunctionalAbsValue} when instead $f$ ranges over \emph{all $1$-bounded holomorphic functions} on the unit disk (see~\cite{landau1913} or~\cite[pp.~20--23]{landau1916}):
\begin{thm}[Landau, 1913]\label{thm:Landau}
	Let $n$ be a non-negative integer.
	Then, for any holomorphic function $f$ on $\DD$ with $\norm{f}_\infty \leq 1$ and power series expansion $f(z) = a_0 + a_1 z + a_2 z^2 + \ldots$, one has
	\begin{equation}
		\label{eq:LandausBound}
		\abs{ a_0+a_1+\ldots+a_n }
		\leq \sum_{\nu=0}^n \binom{-1/2}{\nu}^2 \eqqcolon \mathscr{L}_{n}
		\quad\parentheses*{ = \frac{\log n}{\pi}+O(1) },
	\end{equation}
	where
	\begin{equation}
		\label{eq:BinomialCoefficient}
		\binom{-1/2}{\nu}
		= \prod_{\kappa=0}^{\nu-1} \frac{-1/2-\kappa}{\nu-\kappa}
		= (-1)^\nu \frac{1\cdot 3\cdot \ldots \cdot (2\nu-1)}{2\cdot 4\cdot \ldots \cdot 2\nu}.
	\end{equation}
	Moreover, `$\leq$' in~\zcref{eq:LandausBound} is attained with equality precisely for the rational function $f_n$ given by
	\begin{equation}
		\label{eq:LandausBound:Extremizer}
		f_n(z) = \frac{z^n P_n(1/z)}{P_n(z)},
		\quad\text{where}\quad
		P_n(z) = \sum_{\nu=0}^n \binom{-1/2}{\nu} (-z)^\nu,
	\end{equation}
	and the functions $\eta f_n$, where $\eta$ is any unimodular constant.
\end{thm}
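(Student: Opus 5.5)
The plan is the classical Landau argument: write the partial sum as a contour integral against a kernel, then split that kernel as a product of two squares of a single polynomial. Write $s_n = a_0+\ldots+a_n$. By Cauchy's formula on a circle $\abs{z}=r<1$,
\[
	s_n = \frac{1}{2\pi\I}\oint_{\abs{z}=r} f(z)\,\frac{1+z+\ldots+z^n}{z^{n+1}}\dd{z},
\]
and the same identity holds with the kernel replaced by $K(z)/z^{n+1}$ for any $K$ whose coefficients of $z^0,\ldots,z^n$ all equal $1$. Any such $K$ is allowed, because higher powers of $z$ integrate against $f$ to zero.

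The key choice is $K = P_n^2$, truncated appropriately, where $P_n$ is as in \zcref{eq:LandausBound:Extremizer}. Set $g(z)=(1-z)^{-1/2}=\sum_\nu \binom{-1/2}{\nu}(-z)^\nu$. Then $g^2 = 1/(1-z)$ has all coefficients equal to $1$. Since $P_n$ agrees with $g$ up to order $n$, the coefficients of $z^0,\ldots,z^n$ in $P_n(z)^2$ are all $1$. Hence
\[
	s_n = \frac{1}{2\pi\I}\oint f(z)\frac{P_n(z)^2}{z^{n+1}}\dd{z}.
\]
Take absolute values, use $\abs{f}\le 1$, and let $r\to1$:
\[
	\abs{s_n}\le \frac{1}{2\pi}\int_0^{2\pi}\abs{P_n(e^{\I\theta})}^2\dd\theta = \sum_{\nu=0}^n\binom{-1/2}{\nu}^2=\mathscr{L}_n
\]
by Parseval. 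For $f$ only bounded in the open disk, I would first apply this to $f(\rho z)$ and then let $\rho\to1$; alternatively one can run the argument directly on $\abs{z}=r$ and bound $\abs{P_n}^2$ there by the Parseval sum at radius $r$.

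For the equality cases, first check that $f_n$ is admissible. The coefficients of $P_n$ are positive in modulus with alternating sign pattern absorbed by $(-z)^\nu$; more to the point, $P_n(z)=\sum_\nu c_\nu z^\nu$ with $c_\nu = \binom{-1/2}{\nu}(-1)^\nu>0$ decreasing. By Eneström--Kakeya, $P_n$ then has no zeros in $\abs{z}\le1$; in fact its zeros have modulus $\ge c_{n-1}/c_n>1$. So $f_n = z^nP_n(1/z)/P_n(z)$ is a Blaschke-type quotient, holomorphic on the closed disk, with modulus $1$ on the circle, and hence $\norm{f_n}_\infty\le1$.

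Equality in the integral estimate forces $f\,P_n^2/z^{n+1}\,\dd z$ to have constant argument on the unit circle. Equivalently, on $\abs{z}=1$,
\[
	f(z) = \eta\,\overline{P_n(z)}^{\,2}z^{n}\big/\abs{P_n(z)}^2 = \eta\, z^n\overline{P_n(z)}/P_n(z),
\]
and since $\overline{P_n(z)}=P_n(1/z)$ there (real coefficients), this is $f=\eta f_n$. One then verifies directly that $\eta f_n$ attains the bound.

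The main obstacle is the rigor of this equality step, because $f$ need only be bounded holomorphic in the open disk. I would handle it via the radial limits $f^*$ (Fatou), applying the argument on $\abs{z}=r$ and passing to the limit; then $f^*=\eta f_n$ a.e. on the circle, and the identity theorem for $H^\infty$ boundary values gives $f=\eta f_n$. The asymptotic $\mathscr{L}_n=\log n/\pi+O(1)$ is routine: $\binom{-1/2}{\nu}^2\sim 1/(\pi\nu)$ by Stirling's formula or Wallis' product, and summing gives the stated asymptotic.
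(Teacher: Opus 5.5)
Your proof is correct and follows the same route as the paper's reproduction of Landau's argument in the proof of \zcref{prop:LandauNotSharpForPolys}: Cauchy's formula with kernel $P_n^2$, trivial estimation and Parseval, with Enestr\"om--Kakeya keeping $P_n$ zero-free on the closed disk as in \zcref{cor:LandauExtremizer:PoleBound}. Your boundary-value treatment of the equality case is also sound. The only imprecision is the asymptotic: $\binom{-1/2}{\nu}^2\sim 1/(\pi\nu)$ alone, summed, gives only an $o(\log n)$ error (Landau's original form, as the paper's remark notes). For the $O(1)$ error you need the quantitative statement $\binom{-1/2}{\nu}^2=1/(\pi\nu)+O(\nu^{-2})$, for instance from Stirling with an error term or from the Wallis bounds $1/(\pi(\nu+\tfrac12))<\binom{-1/2}{\nu}^2<1/(\pi\nu)$.
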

\begin{rem*}
	Landau~\cite{landau1913,landau1916} proves the asymptotic in~\zcref{eq:LandausBound} with an error term $o(\log n)$ rather than $O(1)$.
	The proof is based on Stirling's formula.
	Newman~\cite{newman1978} cites Landau's result with the stronger error term $O(1)$.
	Regardless, the latter version is immediate from Landau's proof upon using a stronger form of Stirling's formula.
\end{rem*}

Let us now restrict attention to those $f$ in \zcref{thm:Landau} which are polynomials of degree $<d$.
By compactness, the set of all possible numbers produced by the left hand side of~\zcref{eq:LandausBound} admits a maximum $\mathscr{M}_{n,d}$ (recall \zcref{prob:Shapiro}).
As the extremal function $f_n$ in \zcref{thm:Landau} is only a polynomial for $n=0$, we have the \emph{strict} inequality
\begin{equation}
	\label{eq:PolHolComparison}
	\mathscr{M}_{n,d}
	< \mathscr{L}_n
	\quad
	(\leq \mathscr{L}_d).
\end{equation}
(For a reasonably self-contained justification of this inequality, see \zcref{prop:LandauNotSharpForPolys} below.)
Despite this, in a stroke of genius, Newman~\cite{newman1978} was able to compute the numbers $\mathscr{M}_{n-1,2n}$ \emph{exactly}:
\begin{thm}[Newman, 1978]\label{thm:Newman}
	Let $n$ be a non-negative integer.
	Then, for every polynomial $f = \sum_\nu a_\nu X^\nu \in\CC[X]^{<2n}$ with $\norm{f}_\infty \leq 1$, one has
	\begin{equation}
		\label{eq:NewmansBound}
		\abs{ a_0+a_1+\ldots+a_{n-1} }
		\leq \frac{1}{2} + \frac{1}{n} \sum_{\omega^n = -1} \frac{1}{\abs{\omega-1}}.
	\end{equation}
	(Here the sum ranges over all complex numbers $\omega$ with $\omega^n=-1$.)
	Moreover, `$\leq$' in~\zcref{eq:NewmansBound} is attained with equality for some polynomial satisfying the above conditions; in other words,
	\begin{equation}
		\label{eq:NewmansBound:Variant}
		\mathscr{M}_{n-1,2n}
		= \frac{1}{2} + \frac{1}{n} \sum_{\omega^n = -1} \frac{1}{\abs{\omega-1}}
		\quad\parentheses*{ = \frac{\log n}{\pi}+O(1) }.
	\end{equation}
\end{thm}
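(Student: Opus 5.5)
The identity $a_0+\ldots+a_{n-1}=$ (weighted sum of values of $f$ at roots of unity) gives the upper bound. For sharpness, I would reverse-engineer a polynomial whose values at the nodes align with the conjugate phases of the weights.

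\emph{Upper bound.} Let $\zeta$ range over the $2n$-th roots of unity. Since $\deg f<2n$, discrete averaging extracts coefficients exactly: $\frac{1}{2n}\sum_{\zeta}f(\zeta)\zeta^{-k}=a_k$ for $0\le k<2n$. Summing over $0\le k<n$ gives
\[
  a_0+\ldots+a_{n-1}=\frac{1}{2n}\sum_{\zeta^{2n}=1}f(\zeta)K(\zeta),
  \qquad
  K(z)=\sum_{k=0}^{n-1}z^{-k}=\frac{1-z^{-n}}{1-z^{-1}}.
\]
The $2n$-th roots split into $\zeta^n=1$ and $\zeta^n=-1$. For $\zeta^n=1$ we have $K(1)=n$ and $K(\zeta)=0$ if $\zeta\ne1$. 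For $\omega^n=-1$ we have $K(\omega)=2/(1-\omega^{-1})=2\omega/(\omega-1)$. Hence
\[
  a_0+\ldots+a_{n-1}=\frac{f(1)}{2}+\frac1n\sum_{\omega^n=-1}\frac{\omega}{\omega-1}\,f(\omega),
\]
and $|f|\le1$ on the circle together with the triangle inequality gives \zcref{eq:NewmansBound} at once.

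\emph{Sharpness.} Equality holds precisely when $f(1)=\eta$ and $f(\omega)=\eta\,\overline{\omega/(\omega-1)}\,|\omega-1|$ for every $\omega^n=-1$, with $\eta$ unimodular. Taking $\eta=1$, I need a polynomial of degree $<2n$ with $\|f\|_\infty\le1$ taking these $n+1$ prescribed unimodular values. The remaining $n-1$ degrees of freedom are used to keep $|f|\le1$ between the nodes. My plan is to build $f$ via a Fejér-type interpolation identity at the nodes $\omega^n=-1$. Let
\[
  \ell_\omega(z)=\frac{z^n+1}{n(z\bar\omega-1)}
\]
be the Lagrange basis polynomials of degree $n-1$, so $\ell_\omega(\omega')=\delta_{\omega\omega'}$. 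On $|z|=1$ one has the classical identity $\sum_\omega|\ell_\omega(z)|^2=1$, which is Parseval for the discrete Fourier transform of $\bar\omega\mapsto z^{k}\bar\omega^{k}$.

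I would look for $f$ of the form $f(z)=\sum_\omega u_\omega\,\ell_\omega(z)\,g_\omega(z)$, with $u_\omega$ the prescribed unimodular values, $g_\omega$ of degree $\le n$ and $g_\omega(\omega)=1$, aiming at a Cauchy--Schwarz bound $|f(z)|^2\le\sum_\omega|\ell_\omega(z)|^2\cdot\sum_\omega|g_\omega(z)|^2$ or at a positive-kernel representation. The prescribed phases are those of $\overline{\omega}/(\overline{\omega}-1)$, essentially a branch of $z^{-1/2}$ up to a constant. This suggests taking the $g_\omega$, or equivalently $f$ as a whole, to be built from $P(z)=\sum_{k<n}\binom{-1/2}{k}(-z)^k$-like truncations. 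Alternatively one could symmetrise, e.g.\ $f(z)=z^{n}h(z)\overline{h(1/\bar z)}$ style constructions, which automatically produce the right phases on the circle. I would first test small $n$ ($n=1,2$) numerically to pin down the correct ansatz.

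The main obstacle is this extremal construction: exhibiting an explicit polynomial of degree $<2n$ that hits all $n+1$ prescribed unimodular values while staying $1$-bounded on $\DD$. By contrast, the upper bound is a two-line computation. The asymptotic $\frac{\log n}{\pi}+O(1)$ follows by comparing $\frac1n\sum_\omega|\omega-1|^{-1}$ with $\frac{1}{2\pi}\int|e^{\I\theta}-1|^{-1}\dd\theta$ truncated at distance $\asymp1/n$ from $\theta=0$, which is routine.
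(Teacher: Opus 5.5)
Your upper bound is correct, and it is exactly what the paper itself recovers. Splitting $(1_{\times n},0_{\times n})=\boldsymbol{t}_{+}+\boldsymbol{t}_{-}$ as in the proof of \zcref{cor:NewmanGeneralised:Combined} gives $a_0+\dots+a_{n-1}=\tfrac12 f(1)+\tfrac12\ell_-(f)$, where $\ell_-$ is the functional with $\boldsymbol{t}=(1,\dots,1,-1,\dots,-1)$. That is your quadrature identity (cf.\ \zcref{rem:ComparisonAgainstNewmansResult}).

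The gap is the other half of the statement: that equality is attained. This is what turns \zcref{eq:NewmansBound} into the formula \zcref{eq:NewmansBound:Variant} for $\mathscr{M}_{n-1,2n}$. For this part you only list candidate constructions and propose numerical experiments, so nothing is proved, and the step is not routine. Write $c_\omega=\I\mathrm{e}^{-\I\theta/2}$ for the required values at the nodes $\omega=\mathrm{e}^{\I\theta}$, $0<\theta<2\pi$. Here is why your candidates do not work:
\begin{itemize}
\item The $n+1$ nodes $\{1\}\cup\{\omega^n=-1\}$ are not the zero set of a binomial $X^m\pm1$. Their squared Lagrange interpolants have degree $2n>2n-1$, so the mechanism of \zcref{prop:NewmanLagrangeInterpol} is unavailable.
\item If you use only the nodes $\omega^n=-1$, the extremal $g=\sum_\omega c_\omega L_{\Omega,\omega}^2$ of $\ell_-$ from \zcref{cor:FunctionalNormComputation} has the right values at the $\omega$'s. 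But a direct computation gives $g(1)=\frac{2}{n^2}\sum_{\omega^n=-1}\abs{\omega-1}^{-1}$. This is $<1$ for $n\ge2$ (it is $1/\sqrt2$ for $n=2$ and tends to $0$), so $g$ misses the bound by $\tfrac12(1-g(1))$.
\item Your Cauchy--Schwarz ansatz $\sum_\omega u_\omega\ell_\omega g_\omega$ has the same defect, since nothing in it controls the value at $1$.
\item The ansatz $z^nh(z)\overline{h(1/\bar z)}$ does not give the right phases. On $\abs{z}=1$ it equals $z^n\abs{h(z)}^2$, which is real and $\le 0$ at every node, while no $c_\omega$ is a negative real number.
\item Minor: $\frac{z^n+1}{n(z\bar\omega-1)}$ equals $-1$ at $z=\omega$; the Lagrange basis is $\frac{z^n+1}{n(1-z\bar\omega)}$, cf.\ \zcref{lem:LagrangeInterpolators}.
\end{itemize}

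What is missing is an argument producing an extremal polynomial of $\ell_-$ on $\CC[X]^{<2n}$ that in addition satisfies $f(1)=1$, while still $\norm{f}_\infty\le1$. This is precisely the `bridging' step from $(1,\dots,1,-1,\dots,-1)$ to $(1,\dots,1,0,\dots,0)$ that the paper attributes to Newman and does not reproduce. The paper proves only the sharp $\pm$ version, \zcref{thm:Newman:Gen}, together with the upper bound you found, and cites Newman for attainment.

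Such polynomials do exist, but they are not of Lagrange-square type. For $n=2$ one checks that $f(z)=\tfrac14\bigl[(1+\tfrac{1}{\sqrt2})+(1+\tfrac{3}{\sqrt2})z+(1-\tfrac{3}{\sqrt2})z^2+(1-\tfrac{1}{\sqrt2})z^3\bigr]$ satisfies $\abs{f(\mathrm{e}^{\I\theta})}^2=1-\tfrac14\cos^2\theta\,(1-\cos\theta)\le1$ and $a_0+a_1=\tfrac12+\tfrac{1}{\sqrt2}$. The Lagrange-square construction only reaches $\tfrac{3}{2\sqrt2}$. As it stands, your proposal proves \zcref{eq:NewmansBound} and the asymptotic, but not that the bound is attained.
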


We conclude this section with a short remark on the real case, \emph{i.e.}, inequalities for coefficient sums of real polynomials that are bounded by~$1$ on the interval $[-1,1]$.
In~1892, Markov established sharp bounds for the derivatives of such polynomials (see~\cite[p.~258]{markov1916}).
The question analogous to~\zcref{eq:FunctionalAbsValue} seems to have first been considered by Reimer and Zeller~\cite{reimer1967} who established sharp bounds for even/odd polynomials (see also~\cite{rack1989} and the references therein).
As in the above complex setting, the current state of knowledge concerning the real case seems to be similarly fragmented and leaves much to be desired.
Regardless, the methods that have been used to study the real situation have a somewhat different flavour, and in the present investigation we shall restrict our attention to the complex case.

\section{Results}

\subsection{Approximate formul\ae{}}

If $n$ is kept fixed, then it follows from a (more general) result of Shapiro~\cite[Theorem~13 in §\,2.7]{shapiro1952} that $\mathscr{M}_{n,d} = \mathscr{L}_{d} - O_n(\epsilon_n^d)$ as $d\to\infty$.
(Here $\epsilon_n$ is some positive number $<1$.)
Our first result provides an approximate lower bound for $\mathscr{M}_{n,d}$ when \emph{both} parameters $n<d$ grow:
\begin{thm}\label{thm:Quantitative}
	Let $d>n$ be a positive integers.
	Then the exists a polynomial $\tilde{f}_{n,d} = \sum_\nu a_\nu X^\nu \in\CC[X]^{<d}$ with $\norm{\tilde{f}_{n,d}}_\infty \leq 1$ such that
	\[
		\abs{ a_0+a_1+\ldots+a_n }
		= \parentheses*{ 1 - O\braces*{ n^3 \exp\parentheses*{-\frac{d}{5n}} } } \sum_{\nu=0}^n \binom{-1/2}{\nu}^2.
	\]
\end{thm}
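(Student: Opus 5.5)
The plan is to perturb Landau's extremiser $f_n = z^n P_n(1/z)/P_n(z)$ from \zcref{thm:Landau} into a polynomial of degree $<d$. This uses the abstract's hint that the key input is a quantitative Eneström--Kakeya theorem. The coefficients of $P_n$ are $c_\nu = \abs{\binom{-1/2}{\nu}}$, so $P_n(z) = \sum_{\nu\le n} c_\nu z^\nu$. They are positive and strictly decreasing, with $c_\nu/c_{\nu+1} = (2\nu+2)/(2\nu+1) = 1 + 1/(2\nu+1)$. By Eneström--Kakeya every zero $\zeta$ of $P_n$ satisfies $\abs{\zeta}\geq \min_\nu c_\nu/c_{\nu+1} \geq 1 + 1/(2n+1)$.

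The first step is to make this quantitative. I want a lower bound $\abs{P_n(z)} \gg n^{-a}$ on a disk $\abs{z}\le r$ with $r = 1+ c/n$, for some $c>0$ and some fixed small power $a$. The route I would try is to apply the Eneström--Kakeya argument to $(1-z)P_n(z) = c_0 - \sum_{\nu=1}^{n} (c_{\nu-1}-c_\nu) z^\nu - c_n z^{n+1}$. Here the differences $c_{\nu-1}-c_\nu$ are positive and sum to $c_0 - c_n$, so for $\abs{z} = \rho$ one gets $\abs{(1-z)P_n(z)} \geq 1 - \sum_\nu (c_{\nu-1}-c_\nu)\rho^\nu - c_n\rho^{n+1}$. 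Choosing $\rho = 1 + 1/(5n)$ should leave a margin of order $\gg 1/n$, coming from the gain $1/(2\nu+1)$ in each ratio. This gives $\abs{P_n} \gg 1/n$ on $\abs{z}\le \rho$ (the numerator bound for $(1-z)$ is trivial).

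Next I expand $g = 1/P_n = \sum_k b_k z^k$. By Cauchy's estimate on $\abs{z}=\rho$ we get $\abs{b_k} \ll n \rho^{-k} \le n\exp(-k/(5n)\cdot(1-o(1)))$. Set $Q(z) = z^n P_n(1/z) = \sum_{\nu\le n} c_\nu z^{n-\nu}$, and define the candidate $h = Q\cdot \sum_{k<d-n} b_k z^k$, which has degree $<d$. Then $f_n - h = Q\cdot\sum_{k\ge d-n} b_k z^k$. Since $\abs{Q}\le \sum c_\nu \ll \sqrt n$ on the disk and the tail sum is $\ll n\cdot n\,\rho^{-(d-n)}$, we obtain $\norm{f_n - h}_\infty \ll n^{5/2} e^{-d/(5n)}$, absorbing $e^{n/(5n)}$ into the constant. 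This is within the claimed $n^3 e^{-d/(5n)}$ room.

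Finally I normalise: $\tilde f_{n,d} = h/\norm{h}_\infty$ satisfies $\norm{h}_\infty \le 1+\varepsilon$ with $\varepsilon \ll n^3 e^{-d/(5n)}$. The coefficient functional $a_0+\ldots+a_n$ equals $\frac{1}{2\pi\I}\oint f(z)\,\frac{1-z^{-n-1}}{1-z^{-1}}\, \frac{\dd z}{z}$, so it is bounded by $(n+1)\norm{\cdot}_\infty$. Hence its value on $h$ differs from $\mathscr{L}_n$ by $\ll n\varepsilon$; in fact, since $h$ and $f_n$ share their first $d-n$ Taylor coefficients, the difference is $0$ whenever $d-n>n$. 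Dividing by $1+\varepsilon$ gives the factor $1-O(\varepsilon)$, and multiplying by a unimodular constant makes the sum positive. If $d\le 5n\log n$ or so, the statement is trivial (take $\tilde f = 1$ with the implied constant large), so I may assume $d$ is large relative to $n$.

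The main obstacle is the first step: obtaining the lower bound for $\abs{P_n}$ uniformly on a disk of radius $1+c/n$ with polynomial loss and the precise constant $1/5$ in the exponent. The crude Eneström--Kakeya margin must be tracked carefully near $z=1$, where $P_n(1)\sim \sqrt{n}$ is large but the factor $(1-z)$ vanishes. I would handle the region near $z=1$ separately using $P_n(z)\approx (1-z)^{-1/2}$.
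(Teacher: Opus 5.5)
You have the paper's architecture: a Cauchy estimate on a circle of radius $1+c/n$, truncation, normalisation, and treating small $d$ as trivial. Truncating $1/P_n$ and multiplying by $Q$, instead of truncating $f_n$ itself, is harmless. However, the step you flag as the main obstacle fails as written. Consider your inequality
\[
	\abs{(1-z)P_n(z)} \geq 1-\sum_{\nu=1}^{n}(c_{\nu-1}-c_\nu)\rho^\nu - c_n\rho^{n+1}.
\]
The weights $c_{\nu-1}-c_\nu$ and $c_n$ are positive and sum to $c_0=1$, so the right-hand side is strictly negative for every $\rho>1$. The bound is therefore vacuous everywhere on the circle, not just near $z=1$. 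It uses only monotonicity of the $c_\nu$ (it is the classical Eneström--Kakeya argument with ratio $1$), so the gain $1/(2\nu+1)$ in the ratios never enters.

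The missing idea is to rescale by some $r\le\min_\nu c_\nu/c_{\nu+1}$, e.g.\ $r=1+1/(2n+1)$. Expand
\[
	(1-z/r)P_n(z)=1-\sum_{\nu=1}^{n}(c_{\nu-1}/r-c_\nu)z^\nu-(c_n/r)z^{n+1},
\]
where now $c_{\nu-1}/r-c_\nu\geq0$. For $\abs{z}\le r$, use $\abs{z}^\nu\le r^\nu\abs{z}/r$ and telescope. This gives $\abs{(1-z/r)P_n(z)-1}\le\abs{z}/r$, hence $\abs{P_n(z)}\geq(r-\abs{z})/\abs{r-z}$. This is exactly part~(1) of the paper's quantitative Eneström--Kakeya theorem. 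With it, nothing special happens near $z=1$: the factor $\abs{r-z}$ enters only through a trivial upper bound. Your proposed appeal to $P_n(z)\approx(1-z)^{-1/2}$ at points with $\abs{z}>1$, where the binomial series diverges, is neither needed nor justified.

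There is also a quantitative slip. With $\rho=1+1/(5n)$ you get $\rho^{-d}=\exp(-d\log(1+1/(5n)))$. The loss in the exponent, $d\,(1/(5n)-\log(1+1/(5n)))\asymp d/n^2$, is unbounded once $d/n^2\to\infty$. For $n=1$ you get $(6/5)^{-d}$, which is not $O(e^{-d/5})$, so your `$(1-o(1))$' cannot be absorbed.

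Take $\rho=1+1/(4n)$ as the paper does. Then $r-\rho=(2n-1)/(4n(2n+1))\gg1/n$, so $\abs{P_n}\gg1/n$ on $\abs{z}\le\rho$. Also $\log\rho\geq1/(4n)-1/(32n^2)\geq1/(5n)$, which gives $\rho^{-d}\le e^{-d/(5n)}$. With these two repairs the rest of your argument goes through, giving an error $\ll n^{5/2}e^{-d/(5n)}$, within the claimed bound.
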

On combining this with \zcref{thm:Landau} we immediately deduce the following result:
\begin{cor}\label{cor:MndGrowth}
	\(\displaystyle
		\mathscr{M}_{n,d} = \frac{\log n}{\pi} + O(1)
	\)
	for $d > 16 \mkern 2mu n \log n$ and $n\to\infty$.
\end{cor}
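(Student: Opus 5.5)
The plan is to sandwich $\mathscr{M}_{n,d}$ between \zcref{thm:Quantitative} and \zcref{thm:Landau}, and use the Landau asymptotic $\mathscr{L}_n = \frac{\log n}{\pi}+O(1)$ recorded in~\zcref{eq:LandausBound}.

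\emph{Upper bound.} Every polynomial of degree $<d$ with $\norm{f}_\infty\leq 1$ is a $1$-bounded holomorphic function on $\DD$. Hence \zcref{thm:Landau} gives $\mathscr{M}_{n,d}\leq \mathscr{L}_n$, which is the first inequality in~\zcref{eq:PolHolComparison}. Therefore $\mathscr{M}_{n,d}\leq \frac{\log n}{\pi}+O(1)$, with no constraint on $d$ at all.

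\emph{Lower bound.} The polynomial $\tilde f_{n,d}$ from \zcref{thm:Quantitative} is admissible in \zcref{prob:Shapiro}, so
\[
\mathscr{M}_{n,d}\geq \bigl(1-O(n^3 e^{-d/(5n)})\bigr)\mathscr{L}_n .
\]
It remains to show that the loss $O(n^3 e^{-d/(5n)})\,\mathscr{L}_n$ is $O(1)$ when $d>16\,n\log n$.

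\emph{The error term.} Here
\[
e^{-d/(5n)}< e^{-(16/5)\log n}=n^{-16/5},
\]
so $n^3e^{-d/(5n)}<n^{-1/5}$. Since $\mathscr{L}_n\ll \log n$, the loss is at most $O(n^{-1/5}\log n)=o(1)$. In particular it is $O(1)$, and the factor $1-O(\cdot)$ stays positive for large $n$, so the lower bound is meaningful. Combining the two bounds gives the corollary.

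There is no real obstacle; the only point to check is that the exponent $16/5>3$ beats the $n^3$ factor with enough room to absorb the $\log n$ coming from $\mathscr{L}_n$. The constant $16$ is evidently chosen with margin, since any constant greater than $15$ would do.
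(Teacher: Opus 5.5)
Your proposal is correct and is the paper's argument: it states that the corollary follows immediately by combining \zcref{thm:Quantitative} with Landau's upper bound $\mathscr{M}_{n,d}<\mathscr{L}_n$ and the asymptotic $\mathscr{L}_n=\frac{\log n}{\pi}+O(1)$. Your check that $n^3e^{-d/(5n)}\mathscr{L}_n\ll n^{-1/5}\log n=o(1)$ when $d>16\,n\log n$ supplies exactly the detail the paper leaves implicit, and it matches the paper's remark that $16>3\cdot 5$ is what is needed.
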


Newman~\cite[p.~187]{newman1978} also remarks that by~\zcref{eq:PolHolComparison} and~\zcref{eq:LandausBound} one has
\[
	\mathscr{M}_{n,d} \leq \frac{\log d}{\pi} + O(1),
\]
and proceeds to ask whether \emph{`[the left hand side] ever gets this large'} (as $n$ varies from $0$ to $d-1$).
He points out that~\emph{\zcref{eq:NewmansBound:Variant} answers this in the affirmative}.
We remark that our \zcref{cor:MndGrowth} furnishes the following estimate:
\begin{cor}
	\(\displaystyle
		\mathscr{M}_{n,d} = \frac{\log d}{\pi} + O(\log\log d)
	\)
	provided that
	\(\displaystyle
		n \asymp \frac{d}{\log d}
	\).
\end{cor}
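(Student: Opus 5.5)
The upper bound follows from results already stated, and the lower bound follows from a shift reduction to a smaller partial-sum index where \zcref{cor:MndGrowth} applies. Only the inequality $n\leq d/2$, which the hypothesis $n\asymp d/\log d$ gives for large $d$, is used about $n$.

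\emph{Upper bound.} By \zcref{eq:PolHolComparison} and \zcref{thm:Landau},
\[
	\mathscr{M}_{n,d}<\mathscr{L}_n\leq \mathscr{L}_d=\frac{\log d}{\pi}+O(1).
\]
This uses nothing about the size of $n$.

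\emph{Lower bound via a shift.} Let $m<d'$ and let $g=\sum_\nu b_\nu X^\nu\in\CC[X]^{<d'}$ with $\norm{g}_\infty\leq 1$. For $0\leq s$ put $f=X^s g$. Then $\norm{f}_\infty=\norm{g}_\infty\leq1$ and $\deg f<d'+s$. The coefficients of $f$ are $a_\nu=b_{\nu-s}$, with $a_\nu=0$ for $\nu<s$. Hence
\[
	a_0+\ldots+a_n=b_0+\ldots+b_{n-s}.
\]
Taking $s=n-m$ (when $n\geq m$) and $d'=d-n+m$, so that $d'+s=d$, this gives
\[
	\mathscr{M}_{n,d}\geq \mathscr{M}_{m,\,d-n+m}.
\]
The idea is to push the relevant partial sum down to a much smaller index $m$ while keeping the degree budget large.

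\emph{Choice of parameters.} Put $m=\lfloor d/(40\log d)\rfloor$. If $n<m$, then $n\leq d/(40\log d)<d/(16\log n)$, so $d>16\,n\log n$. Then \zcref{cor:MndGrowth} applies directly and gives
\[
	\mathscr{M}_{n,d}=\frac{\log n}{\pi}+O(1)=\frac{\log d}{\pi}+O(\log\log d),
\]
using $n\asymp d/\log d$. If $n\geq m$, the shift applies with $d'=d-n+m\geq d/2$. For large $d$ we have $16\,m\log m\leq 16\cdot\frac{d}{40\log d}\log d<d/2\leq d'$. Thus \zcref{cor:MndGrowth} (with $m\to\infty$ as $d\to\infty$) yields
\[
	\mathscr{M}_{n,d}\geq \mathscr{M}_{m,d'}=\frac{\log m}{\pi}+O(1)=\frac{\log d-\log\log d}{\pi}+O(1).
\]
Combined with the upper bound, this proves $\mathscr{M}_{n,d}=\log d/\pi+O(\log\log d)$.

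\emph{Main obstacle.} A direct application of \zcref{thm:Quantitative} or \zcref{cor:MndGrowth} at index $n$ fails once $n$ exceeds about $d/(16\log d)$. The substitution $g(X^k)$ also fails, because it preserves the ratio $d/n$. The step that makes the argument work is the shift by $X^{n-m}$. It leaves the degree budget $d-n+m\geq d/2$ essentially intact, since $n=o(d)$, while lowering the partial-sum index to $m\approx d/\log d$ with a small constant. That index still satisfies $\log m=\log d+O(\log\log d)$, and the shifted pair $(m,d-n+m)$ meets the condition $d'>16\,m\log m$ of \zcref{cor:MndGrowth}.
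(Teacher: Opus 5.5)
Your argument is correct, but it takes a different route from the paper. The paper deduces this corollary in one line from \zcref{cor:MndGrowth}, applied at the index $n$ itself: $\mathscr{M}_{n,d}=\frac{\log n}{\pi}+O(1)$ together with $\log n=\log d-\log\log d+O(1)$. As you noticed, that application needs $d>16\,n\log n$. Read literally, it therefore covers $n\asymp d/\log d$ only when the upper implied constant is at most about $1/16$, and the paper does not comment on this.

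Your shift $f=X^{n-m}g$ gives $\mathscr{M}_{n,d}\geq\mathscr{M}_{m,d-n+m}$. It lowers the partial-sum index to $m\approx d/(40\log d)$ while keeping the degree budget at least $d/2$, so \zcref{cor:MndGrowth} applies at the smaller index. This proves the statement for arbitrary implied constants, indeed for all $c\,d/\log d\leq n\leq d/2$. It costs nothing, because $\log m$ and $\log n$ differ by $O(1)$ in this range. If you bound from above by $\mathscr{L}_n=\frac{\log n}{\pi}+O(1)$ rather than by $\mathscr{L}_d$, your argument even gives $\mathscr{M}_{n,d}=\frac{\log n}{\pi}+O(1)$ throughout $n\asymp d/\log d$, extending \zcref{cor:MndGrowth} itself. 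The paper's direct route yields that two-sided $O(1)$ statement without any extra device, but only in its restricted range.

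One correction to your preamble: it is not true that only $n\leq d/2$ is used about $n$. In the case $n<m$ you need $n\gg d/\log d$, both so that $n\to\infty$ and so that $\log n=\log d+O(\log\log d)$. Some lower bound of this kind is unavoidable, since $\mathscr{M}_{n,d}<\mathscr{L}_n$ stays bounded for bounded $n$. What your proof actually uses is $c\,d/\log d\leq n\leq d/2$ for some fixed $c>0$.
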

This may be regarded as \emph{another positive answer to Newman's question} up to lower order terms.

\begin{rem}[Improvements to the error terms]
	We have refrained from attempting to get the sharpest error term in \zcref{thm:Quantitative} that our method allows for.
	Also the number $16 \;(>3\cdot 5)$ in \zcref{cor:MndGrowth} is not optimal in this regard.
	It would be particularly pleasing if \zcref{cor:MndGrowth} could be established for $d\gg n$, but we do not know if this can be achieved.
	We refer to~\zcref{subsec:RemarksOnZeros} below for further discussion pertaining to potential improvements.
\end{rem}

\subsection{Exact formul\ae{}}
\label{subsec:ShapiroExact}

Before stating further results, let us provide more context.
Shapiro~\cite{shapiro1952} also considers extremal problems for more general sums than in~\zcref{eq:FunctionalAbsValue}, namely the question of maximising
\begin{equation}\label{eq:FunctionalAbsValue:General}
	\abs{ t_0 a_0 + \ldots + t_{d-1} a_{d-1} }
\end{equation}
over all polynomials $f = \sum_\nu a_\nu X^\nu \in\CC[X]^{<d}$ with $\norm{f}_\infty \leq 1$, where $\boldsymbol{t} = (t_0,\ldots,t_{d-1})\in\CC^d$ is fixed.
The outcome of his theory is a better theoretical understanding of the behaviour of the extremal polynomials (for the above maximisation problem) on the unit circle.
In particular, for those $\boldsymbol{t}$ for which the extremal polynomials are not given by monomials, he finds that for some $r\leq d$ there are complex numbers $z_1,\ldots,z_r, u_1,\ldots,u_r$ with $\abs{z_1} = \ldots = \abs{z_r} = 1$ such that
\begin{equation}\label{eq:FunctionalInterpolation}
	t_0 a_0 + \ldots + t_{d-1} a_{d-1}
	= \sum_{\nu=1}^r u_\nu f(z_\nu)
\end{equation}
for all $f$ as above, and the the answer to the maximisation of~\zcref{eq:FunctionalAbsValue:General} is given \emph{exactly} by $\abs{u_1}+\ldots+\abs{u_r}$ (see~\cite[§§\,2.3--4]{shapiro1952}).

In Shapiro's terminology, the $z_\nu$ are called the \emph{nodes} associated to the functional given by $\boldsymbol{t}$.
Observe that~\zcref{eq:FunctionalInterpolation} may be interpreted as decomposing said functional using point evaluations at the $z_\nu$ (see also~\zcref{eq:FunctionalPointEvalDecomp} below), yet there may be many more such decompositions with point evaluations not at the nodes.
Despite this promising insight, Shapiro notes that \emph{`[he] has been unable to devise any method for finding the nodes'}~(\cite[§\,2.4\,2]{shapiro1952}).\footnote{%
	Note, however, the partial progress in~\cite[§\,2.8]{shapiro1952} for some particular examples.
}

Here we point out that Newman's method~\cite{newman1978} furnishes a (thin) host of examples for which the nodes \emph{can} be computed.
The next theorem (proved in~\zcref{subsec:ApplicationToFunctionals:CosmeticSurgery} below) follows rather easily from a careful reading of Newman's argument:\footnote{%
	Newman does not state this as a result, yet it would be hard to conceive that he would not have been aware of this.
}
\begin{thm}\label{thm:Newman:Gen}
	Let $n$ and $d$ be positive integers such that $d\geq 2n-1$.
	Moreover, let
	\[
		\boldsymbol{t}^n = (t_{0\bmod 2n},\ldots,t_{n-1\bmod 2n}) \in \CC^n
	\]
	be arbitrary.
	Let $t_{n+\nu\bmod 2n} = - t_{\nu\bmod 2n}$ for $\nu=0,\ldots,n-1$.
	Then, for every polynomial $f = \sum_\nu a_\nu X^\nu \in\CC[X]^{<d}$ with $\norm{f}_\infty \leq 1$, one has
	\begin{equation}
		\label{eq:NewmanGeneralised:Plus}
		\abs[\bigg]{ \sum_{\nu=0}^{d-1} t_{\nu\bmod2n} \mkern 2mu a_\nu }
		\leq \frac{1}{n} \sum_{\omega^n=-1} \abs[\bigg]{ \sum_{\nu=0}^{n-1} \frac{t_{\nu\bmod2n}}{\omega^\nu} }.
	\end{equation}
	Moreover, there is a choice of $f$ as above for which `$\leq$' in~\zcref{eq:NewmanGeneralised:Plus} is attained with equality.
	In fact, this $f$ may even be chosen to have degree $\leq 2n-2$.
\end{thm}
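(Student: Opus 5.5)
The plan is to exhibit the functional $\Lambda(f)=\sum_{\nu=0}^{d-1} t_{\nu\bmod 2n} a_\nu$ explicitly as a combination of point evaluations at the $n$ roots of $\omega^n=-1$, as in~\zcref{eq:FunctionalInterpolation}. The upper bound then follows from $\norm{f}_\infty\le 1$, and sharpness from a Fejér-kernel interpolant of degree $\le 2n-2$.

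\emph{Step 1 (decomposition).} Extend $t$ to all indices by $t_{\nu+n}=-t_\nu$; this is consistent with the hypothesis, since $t$ is then $2n$-periodic. Put
\[
	u_\omega = \frac1n \sum_{\mu=0}^{n-1} \frac{t_\mu}{\omega^\mu}
	\qquad (\omega^n=-1).
\]
I claim that $\Lambda(X^k)=t_k=\sum_\omega u_\omega \omega^k$ for every $k\ge 0$.

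For $0\le k\le n-1$ we have
\[
	\sum_\omega u_\omega\omega^k=\frac1n\sum_{\mu=0}^{n-1} t_\mu\sum_\omega \omega^{k-\mu}.
\]
Here $\abs{k-\mu}<n$. Writing $\omega=\omega_0\zeta$ with $\zeta$ ranging over the $n$-th roots of unity, the inner sum is $\omega_0^{k-\mu}\sum_\zeta\zeta^{k-\mu}$, which vanishes unless $k=\mu$, in which case it equals $n$. Hence the sum is $t_k$.

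For $k\ge n$ we have $\omega^k=-\omega^{k-n}$ and $t_k=-t_{k-n}$, so the identity propagates by induction. By linearity, $\Lambda(f)=\sum_\omega u_\omega f(\omega)$ for every polynomial $f$. This holds in particular for $f$ of degree $<d$, and in fact for any $d$. Consequently $\abs{\Lambda(f)}\le\sum_\omega\abs{u_\omega}$, which is~\zcref{eq:NewmanGeneralised:Plus}.

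\emph{Step 2 (extremal polynomial).} The step needing an idea is sharpness. We need $g$ of degree $\le 2n-2$ with $\norm{g}_\infty\le1$ and $g(\omega)=c_\omega\coloneqq\overline{u_\omega}/\abs{u_\omega}$, taking $c_\omega=1$ if $u_\omega=0$. Let $F(z)=\sum_{\abs{j}<n}(1-\abs{j}/n)z^j$ be the Fejér kernel, and define
\[
	g(z)=\frac1n\sum_{\omega^n=-1} c_\omega\,(z/\omega)^{n-1}F(z/\omega).
\]
Since $(z/\omega)^{n-1}F(z/\omega)$ is a polynomial in $z$ of degree $2n-2$, so is $g$, and $2n-2\le d-1$ because $d\ge 2n-1$.

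On $\abs{z}=1$ we have $F(z/\omega)=\frac1n\abs{\sin(n\theta/2)/\sin(\theta/2)}^2\ge0$, where $z/\omega=e^{\I\theta}$. Moreover $\sum_\omega F(z/\omega)=n$: in the Fourier expansion, only the frequencies $j\equiv0\pmod n$ with $\abs{j}<n$ survive, i.e.\ only $j=0$. Hence $\abs{g(z)}\le\frac1n\sum_\omega F(z/\omega)=1$ on the circle, and by the maximum principle on $\DD$.

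For interpolation, take $z=\omega'$ with $\omega'^n=-1$. Then $\omega'/\omega$ is an $n$-th root of unity, and $F$ vanishes at the nontrivial ones and equals $n$ at $1$. Thus $g(\omega')=c_{\omega'}$. Therefore $\Lambda(g)=\sum_\omega u_\omega c_\omega=\sum_\omega\abs{u_\omega}$, giving equality.

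The main obstacle is finding this bounded interpolant; the Fejér kernel resolves it because of its nonnegativity, its partition-of-unity property over the shifted roots, and its vanishing at the other nodes. Step 1 is a routine root-of-unity orthogonality computation.
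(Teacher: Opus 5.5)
Your proof is correct and is essentially the paper's argument. The weights $u_\omega$ are the same; the paper verifies \zcref{eq:LinearSystem} for all $0\le\kappa<d$ at once via \zcref{lem:OrthogonalityRelations}, where you induct on $k\ge n$. Your interpolant is also exactly the paper's extremal polynomial, because with $w=z/\omega$ one has $\frac1n w^{n-1}F(w)=\frac{1}{n^2}(1+w+\dots+w^{n-1})^2=L_{\Omega,\omega}(z)^2$ by \zcref{lem:LagrangeInterpolators}.

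The one real difference is your Fourier-side check $\frac1n\sum_{\omega}F(z/\omega)=1$. It gives a shorter proof of the key identity $\sum_{\omega}\abs{L_{\Omega,\omega}(z)}^2=1$ on $\abs{z}=1$ (\zcref{prop:NewmanLagrangeInterpol}) than the paper's route through \zcref{lem:RootComputation} and the partial-fraction identity of \zcref{lem:PartialFractionDecomp}.
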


\begin{rem*}
	Evidently, \zcref{thm:Newman:Gen} falls short of implying \zcref{thm:Newman}.
	Here the latter theorem corresponds to the choice $\boldsymbol{t} = (1,\ldots,1,0,\ldots,0) \in \CC^d$ in~\zcref{eq:FunctionalAbsValue:General}, with equally many $1$'s and $0$'s.
	However, \zcref{thm:Newman:Gen} does handle $\boldsymbol{t} = (1,\ldots,1,-1,\ldots,-1)$ and Newman~\cite{newman1978} provides a rather ingeneous argument for bridging this gap.
\end{rem*}

We note that the vectors $\boldsymbol{t}$ in~\zcref{eq:FunctionalAbsValue:General} to which \zcref{thm:Newman:Gen} applies are of the form
\[
	\boldsymbol{t} = \parentheses*{\,
		\myTVect{ }  \mkern 2mu, \,
		\myTVect{-}  \mkern 2mu, \,
		\ldots       \mkern 2mu, \,
		\myTVect{\pm}            \,
		\mathllap{ /\!/\!/ \,\, }
	}
	\in \CC^d,
\]
where `$/\!/\!/$' indicates that suitably many last few entries in the final block are to be omitted as to have $\boldsymbol{t}$ belong to $\CC^d$.
We are also able to prove the following variant of \zcref{thm:Newman:Gen} which handles the more symmetric case of vectors $\boldsymbol{t}$ of the form
\[
	\boldsymbol{t} = \parentheses*{\,
		\myTVect{ }  \mkern 2mu, \,
		\myTVect{ }  \mkern 2mu, \,
		\ldots       \mkern 2mu, \,
		\myTVect{ }              \,
		\mathllap{ /\!/\!/ \,\, }
	}
	\in \CC^d:
\]

\begin{thm}\label{thm:Newman:Gen:Variant}
	Let $n$ and $d$ be positive integers such that $d\geq 2n-1$.
	Moreover, let
	\[
		\boldsymbol{t}^n = (t_{0\bmod n},\ldots,t_{n-1\bmod n}) \in \CC^n
	\]
	be arbitrary.
	Then, for every polynomial $f = \sum_\nu a_\nu X^\nu \in\CC[X]^{<d}$ with $\norm{f}_\infty \leq 1$, one has
	\begin{equation}
		\label{eq:NewmanGeneralised:Minus}
		\abs[\bigg]{ \sum_{\nu=0}^{d-1} t_{\nu\bmod n} \mkern 2mu a_\nu }
		\leq \frac{1}{n} \sum_{\omega^n=1} \abs[\bigg]{ \sum_{\nu=0}^{n-1} \frac{t_{\nu\bmod n}}{\omega^\nu} }.
	\end{equation}
	Moreover, there is a choice of $f$ as above for which `$\leq$' in~\zcref{eq:NewmanGeneralised:Minus} is attained with equality.
	In fact, this $f$ may even be chosen to have degree $\leq 2n-2$.
\end{thm}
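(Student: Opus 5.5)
The upper bound in~\zcref{eq:NewmanGeneralised:Minus} should follow from a discrete Fourier (aliasing) identity, valid for every $d$. Equality should follow by interpolating unimodular data at the $n$-th roots of unity with a Fejér-kernel polynomial.

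\emph{Upper bound.} Let $\omega$ range over the $n$-th roots of unity. For $0\leq k<n$ and $f=\sum_\nu a_\nu X^\nu$ of any degree, orthogonality of characters gives
\[
	\frac1n\sum_{\omega^n=1} f(\omega)\,\omega^{-k} = \sum_{\nu\equiv k \bmod n} a_\nu .
\]
Set $T(\omega)=\sum_{k=0}^{n-1} t_{k\bmod n}\,\omega^{-k}$. Multiplying the identity by $t_{k\bmod n}$ and summing over $k$ yields
\[
	\sum_{\nu=0}^{d-1} t_{\nu\bmod n}\,a_\nu=\frac1n\sum_{\omega^n=1} T(\omega)\,f(\omega).
\]
Since $\abs{f(\omega)}\leq 1$, the inequality~\zcref{eq:NewmanGeneralised:Minus} follows at once by the triangle inequality. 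It remains to produce an $f$ of degree $\leq 2n-2\leq d-1$ with $\norm{f}_\infty\leq 1$ and $f(\omega)=c_\omega$, where $c_\omega=\overline{T(\omega)}/\abs{T(\omega)}$ (and $c_\omega=1$ if $T(\omega)=0$). For such an $f$ every term $T(\omega)f(\omega)=\abs{T(\omega)}$ is non-negative, so equality holds.

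\emph{Interpolation.} Let $F(z)=\frac1n\sum_{\abs{j}<n}(n-\abs{j})z^j$ be the Fejér kernel. On the unit circle, $F(z)=\frac1n\abs{\sum_{k=0}^{n-1}z^k}^2\geq 0$. Moreover $F(1)=n$, and $F(\eta)=0$ for every $n$-th root of unity $\eta\neq1$. Define
\[
	f(z)=\frac{z^{n-1}}{n}\sum_{\omega^n=1} c_\omega\,\omega^{-(n-1)}F(z/\omega).
\]
The factor $z^{n-1}$ clears the negative powers, so $f$ is a polynomial of degree $\leq 2n-2$. At a root $\omega_0$, only the term $\omega=\omega_0$ survives, giving $f(\omega_0)=\omega_0^{n-1}c_{\omega_0}\omega_0^{-(n-1)}\cdot n/n=c_{\omega_0}$.

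On $\abs{z}=1$, positivity of $F$ and $\abs{c_\omega}=1$ give $\abs{f(z)}\leq\frac1n\sum_\omega F(z/\omega)$. By the same aliasing identity, this sum keeps only the coefficients of $F$ with $j\equiv 0\bmod n$ and $\abs{j}<n$, i.e.\ $j=0$. Its value is therefore $\frac1n\cdot n\cdot\frac{n}{n}=1$. The maximum principle then gives $\norm{f}_\infty\leq1$ on $\DD$.

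The only non-routine step is this interpolation: one must find unimodular interpolants at $n$ nodes with degree only $2n-2$ and sup-norm exactly one. The positivity of the Fejér kernel together with the partition-of-unity identity $\frac1n\sum_\omega F(z/\omega)=1$ handles it; the rest is bookkeeping.
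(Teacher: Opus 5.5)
Your proof is correct. Its skeleton is the paper's: your aliasing identity is the decomposition $\sum_\nu t_{\nu\bmod n}a_\nu=\sum_{\omega^n=1}u_\omega f(\omega)$ with $u_\omega=T(\omega)/n$. The paper obtains the same decomposition by solving the linear system of \zcref{lem:FunctionalPointEvalDecomp:Criterion} with the orthogonality relations. Your interpolant is also literally the paper's extremal polynomial $\sum_\omega c_\omega L_{\Omega,\omega}^2$ from \zcref{cor:FunctionalNormComputation}. To see this, note that for $\omega^n=1$ one has $L_{\Omega,\omega}(z)=\frac1n\sum_{k=0}^{n-1}(z/\omega)^k$, hence $L_{\Omega,\omega}(z)^2=\frac1n(z/\omega)^{n-1}F(z/\omega)$.

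The genuine difference is how you prove the key fact $\sum_\omega\abs{L_{\Omega,\omega}(z)}^2=1$ on $\abs{z}=1$ (\zcref{prop:NewmanLagrangeInterpol}). The paper follows Newman: it writes $L_{\Omega,\omega}$ in closed form and reduces the claim to the partial-fraction identity $\sum_\omega\omega/(X-\omega)^2=\mp n^2X^{n-1}/(X^n\pm1)^2$. It proves that identity by matching values and first derivatives at the nodes, using the root-product formulas of \zcref{lem:RootComputation}. You instead observe that $\abs{L_{\Omega,\omega}(z)}^2=\frac1n F(z/\omega)$ on the circle, and that averaging $F(z/\omega)$ over the nodes kills every Fourier mode except the constant one.

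Your route is shorter and explains \emph{why} the identity holds: it is Fejér positivity plus discrete orthogonality. It also transfers verbatim to the nodes $\omega^n=-1$, since the same formula for $L_{\Omega,\omega}$ holds there. With the obvious sign twist in the aliasing step, it therefore yields \zcref{thm:Newman:Gen} as well. The paper's route instead stays within exact identities in $\CC(X)$ and treats both signs in a single algebraic computation. That computation is packaged as the general norm formula of \zcref{cor:FunctionalNormComputation}, which is then reused for both theorems.
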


As an application of (either of) the two theorems we prove the following cute inequality:

\begin{cor}\label{cor:Application:CuteIneq}
	Let $0\leq k<n$ be integers.
	Then, for any polynomial $f\in\CC[X]^{<2n+k}$,
	\begin{equation}\label{eq:Application:CuteIneq}
		\abs*{\frac{f^{(k)}(0)}{k!}} + \abs*{\frac{f^{(n+k)}(0)}{(n+k)!}}
		\leq \norm{f}_\infty.
	\end{equation}
	Moreover, equality occurs for some polynomial of degree $\leq 2n-2$.
\end{cor}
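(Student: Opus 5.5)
The plan is to deduce the corollary directly from \zcref{thm:Newman:Gen:Variant}, with a rotation $z\mapsto\lambda z$ used to make the phases of the two coefficients agree. By homogeneity it suffices to treat $\norm{f}_\infty\leq 1$, since the case $f=0$ is trivial. Write $f=\sum_\nu a_\nu X^\nu$, so that $a_k = f^{(k)}(0)/k!$ and $a_{n+k}=f^{(n+k)}(0)/(n+k)!$.

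\emph{Step 1: apply the theorem.} Take $d=2n+k$; this satisfies $d\geq 2n-1$. Choose $\boldsymbol{t}^n$ with $t_{k\bmod n}=1$ and all other entries zero. The indices $0\leq\nu<2n+k$ with $\nu\equiv k \pmod n$ are exactly $\nu=k$ and $\nu=n+k$, because $2n+k$ itself is excluded. Hence the left-hand side of~\zcref{eq:NewmanGeneralised:Minus} equals $\abs{a_k+a_{n+k}}$. The right-hand side is
\[
	\frac1n\sum_{\omega^n=1}\abs{\omega^{-k}}=1.
\]
So $\abs{a_k+a_{n+k}}\leq 1$ for every $f\in\CC[X]^{<2n+k}$ with $\norm{f}_\infty\leq1$.

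\emph{Step 2: align the phases.} Apply Step 1 to $g(z)=f(\lambda z)$ with $\abs{\lambda}=1$. Then $\norm{g}_\infty=\norm{f}_\infty$, and the coefficients of $g$ are $\lambda^\nu a_\nu$. Step 1 gives
\[
	\abs{a_k+\lambda^n a_{n+k}}\leq 1 .
\]
Since $\lambda^n$ runs over all unimodular numbers, we may pick it so that $a_k$ and $\lambda^n a_{n+k}$ have the same argument. This yields $\abs{a_k}+\abs{a_{n+k}}\leq 1$, which is~\zcref{eq:Application:CuteIneq}.

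\emph{Step 3: equality.} By the equality clause of \zcref{thm:Newman:Gen:Variant}, there is an $f$ of degree $\leq 2n-2$ with $\norm{f}_\infty\leq1$ and $\abs{a_k+a_{n+k}}=1$. For this $f$,
\[
	1=\abs{a_k+a_{n+k}}\leq\abs{a_k}+\abs{a_{n+k}}\leq 1 .
\]
In particular $f\neq0$, and Step 2 applied to $f/\norm{f}_\infty$ gives $\abs{a_k}+\abs{a_{n+k}}\leq\norm{f}_\infty\leq1$. Hence $\norm{f}_\infty=1$ and equality holds in~\zcref{eq:Application:CuteIneq}.

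I expect no step to be a genuine obstacle. The only point needing care is the bookkeeping in Step 1: checking that the periodic vector $\boldsymbol{t}$ restricted to $d=2n+k$ picks out exactly the two indices $k$ and $n+k$, using $k<n$ and $d=2n+k$. Alternatively, \zcref{thm:Newman:Gen} with $t_{k\bmod 2n}=1$ gives $\abs{a_k-a_{n+k}}\leq1$ for the same $d$, and the same rotation argument then finishes the proof.
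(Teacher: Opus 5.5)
Your proposal is correct and follows essentially the same route as the paper: apply \zcref{thm:Newman:Gen:Variant} with $d=2n+k$ and $\boldsymbol{t}^n$ a standard unit vector to get $\abs{a_k+a_{n+k}}\leq\norm{f}_\infty$, then replace $f$ by $f(\lambda X)$ to align the arguments, and obtain sharpness from the theorem's equality clause. Your explicit check that only the indices $\nu=k$ and $\nu=n+k$ occur below $2n+k$, together with the equality argument in Step~3, fills in details the paper leaves implicit.
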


\subsection{A general bound}
\label{subsec:GeneralBound}

In both \zcref{thm:Newman:Gen} and \zcref{thm:Newman:Gen:Variant} we are able to handle an $n$-dimensional subspace of functionals on $\CC[X]^{<d}$.
Specifically for $d\in\set{2n-1,2n}$ the sum of these subspaces turns out to be the \emph{whole} dual space of $\CC[X]^{<d}$.
The underlying reason is that (in the notation from~\zcref{subsec:ShapiroExact}) every vector $\boldsymbol{t}\in\CC^d$ admits a decomposition of the form
\[
	\boldsymbol{t}
	= \parentheses*{\,
		\myTVect[{\boldsymbol{t}_{+}}]{ } \mkern 2mu, \,
		\myTVect[{\boldsymbol{t}_{+}}]{ }             \,
		\mathllap{ /\!/\!/ \,\, }
	}
	+ \parentheses*{\,
		\myTVect[{\boldsymbol{t}_{-}}]{ } \mkern 2mu, \,
		\myTVect[{\boldsymbol{t}_{-}}]{-}             \,
		\mathllap{ /\!/\!/ \,\, }
	}
\]
for suitable vectors $\boldsymbol{t}_{+}^n, \boldsymbol{t}_{-}^n \in \CC^n$ (see~\zcref{eq:NewmanGeneralised:Combined:Proof:tVectDecomp} below).

In general, this does not, unfortunately, enable us to obtain \emph{sharp} bounds for~\zcref{eq:FunctionalAbsValue:General}.
However, we still get upper bounds:

\begin{cor}\label{cor:NewmanGeneralised:Combined}
	Let $d$ be a positive integer and write $D = d$ or $=d-1$ according to whether $d$ is even or odd.
	Let $\boldsymbol{t} = (t_0,\ldots,t_{d-1}) \in \CC^d$ be arbitrary.
	Then, for every polynomial $f = \sum_\nu a_\nu X^\nu \in\CC[X]^{<d}$ with $\norm{f}_\infty \leq 1$, one has
	\begin{equation}
		\label{eq:NewmanGeneralised:Combined}
		\abs{ t_0 a_0 + \ldots + t_{d-1} a_{d-1} }
		\leq \frac{1}{D} \sum_{\omega^D=1} \abs[\bigg]{ \sum_{\nu=0}^{d-1} \frac{t_\nu}{\omega^\nu} }.
	\end{equation}
\end{cor}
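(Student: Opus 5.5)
The plan is to decompose $\boldsymbol{t}$ into a $D/2$-periodic part and a $D/2$-antiperiodic part, and to apply \zcref{thm:Newman:Gen:Variant} to the first and \zcref{thm:Newman:Gen} to the second, both with $n = D/2$. Concretely, put $m = D/2$, so that $d\in\set{2m,2m+1}$ and $d\geq 2m-1$ holds in either case. If $d=2m+1$, the top coefficient $t_{d-1}=t_{2m}$ has index $2m$. It is handled by extending periodically: the variant theorem with period $m$ assigns index $2m$ the weight $t_{0\bmod m}$, while the antiperiodic theorem assigns it $t_{2m\bmod 2m}=t_{0\bmod 2m}$. So I first need to choose the decomposition so that it reproduces all $d$ entries of $\boldsymbol{t}$.

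For $d=2m$ I define
\[
	t^{+}_\nu = \tfrac12 (t_\nu + t_{\nu+m}),
	\qquad
	t^{-}_\nu = \tfrac12 (t_\nu - t_{\nu+m})
	\qquad (0\leq\nu<m).
\]
Then $t_\nu = t^+_{\nu\bmod m} + t^-_{\nu\bmod 2m}$ for all $\nu<2m$, where $t^-$ is extended antiperiodically. For $d=2m+1$ the extra index $2m$ would impose a third constraint on $(t^+_0,t^-_0)$, which is generally inconsistent. This is exactly why $D=d-1$ in the odd case; the bound then involves the sum over $\nu\leq d-1$ and $D$-th roots of unity. I would handle this as follows. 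Since $\omega^{2m}=1$, the term $t_{2m}\omega^{-2m}$ merges with $t_0$, so the right-hand side of \zcref{eq:NewmanGeneralised:Combined} equals that for the folded vector $t'_0=t_0+t_{2m}$, $t'_\nu=t_\nu$ ($1\leq\nu<2m$). I then split $t'$ as above. The constraint at index $2m$ requires $t^+_0+t^-_0=t_{2m}$, while index $0$ requires $t^+_0+t^-_0=t_0$. This is consistent only when $t_0 = t_{2m}$, so a naive fold fails. In that case I would instead reduce by the triangle inequality: write $\boldsymbol{t}$ as the sum of a vector whose entries at indices $0$ and $2m$ agree and a remainder. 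Getting this decomposition right for odd $d$ is the step I expect to be the main obstacle, and I would sort it out by examining how the paper's decomposition \zcref{eq:NewmanGeneralised:Combined:Proof:tVectDecomp} is meant to work.

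The core computation (the $d=2m$ case) is this. By the triangle inequality,
\[
	\abs[\Big]{\sum_\nu t_\nu a_\nu}
	\leq \abs[\Big]{\sum_\nu t^+_{\nu\bmod m} a_\nu}
	+ \abs[\Big]{\sum_\nu t^-_{\nu\bmod 2m} a_\nu}.
\]
\zcref{thm:Newman:Gen:Variant} and \zcref{thm:Newman:Gen} bound these two terms by $\frac1m\sum_{\omega^m=1}\abs{\sum_{\nu<m} t^+_\nu\omega^{-\nu}}$ and $\frac1m\sum_{\omega^m=-1}\abs{\sum_{\nu<m} t^-_\nu\omega^{-\nu}}$ respectively. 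The $D$-th roots of unity split into those with $\omega^m=1$ and those with $\omega^m=-1$. For a $D$-th root $\omega$ we have
\[
	\sum_{\nu<2m} t_\nu\omega^{-\nu} = \sum_{\nu<m}\bigl(t_\nu+\omega^{-m}t_{\nu+m}\bigr)\omega^{-\nu},
\]
which equals $2\sum_{\nu<m} t^\pm_\nu\omega^{-\nu}$ according as $\omega^m=\pm1$. Hence $\frac1D\sum_{\omega^D=1}\abs{\cdots}$ is exactly the sum of the two bounds, and the factors $2$ and $\frac1m=\frac2D$ match. This establishes \zcref{eq:NewmanGeneralised:Combined}.
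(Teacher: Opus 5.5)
Your argument for even $d$ is exactly the paper's. You split $\boldsymbol t$ into the $m$-periodic part with entries $\frac12(t_\nu+t_{\nu+m})$ and the $m$-antiperiodic part with entries $\frac12(t_\nu-t_{\nu+m})$, bound the two functionals by \zcref{thm:Newman:Gen:Variant} and \zcref{thm:Newman:Gen} with $n=m$, and recombine via $\omega^{-m}=\pm1$ into one sum over $\omega^D=1$.

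The genuine gap is the odd case. You leave it open, and it cannot be closed with $D=d-1$, because then the inequality is false. For $d=1$ you even get $D=0$. For $d=3$ and $\boldsymbol t=(1,1,0)$ the right-hand side is $\frac12(\abs{2}+\abs{0})=1$. Now take $f=(1+2X-X^2)/(2\sqrt2)$. On $\abs{z}=1$ you have $\abs{1+2z-z^2}=\abs{\bar z+2-z}=\abs{2-2\I\,\mathrm{Im}\,z}\leq2\sqrt2$, so $\norm{f}_\infty\leq1$, yet $a_0+a_1=3/(2\sqrt2)>1$. This is consistent with what you observed: your suggested triangle-inequality repair, splitting off a vector whose entries at indices $0$ and $2m$ agree, always leaves an extra term like $\abs{t_{2m}-t_0}\,\abs{a_{2m}}$ that the right-hand side does not account for.

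The paper's proof does something different in the odd case. It takes $n=D/2$ with $d=2n-1$ and sets $t_d=t_{2n-1}=0$. That means $D=d+1$, the least even integer $\geq d$. The values $\mathscr{C}_{1,3}=\frac12+\frac1{\sqrt2}$ and $\mathscr{C}_{1,5}=\frac23+\frac1{\sqrt3}$ in \zcref{example:NumericalExperiment} also require $D=d+1$, so $D=d-1$ in the statement is a slip.

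With this normalisation the missing idea is simply to round $d$ up and pad with a zero, rather than round down and fold. Both theorems only require $d\geq 2n-1$, so they apply to the two functionals on $\CC[X]^{<2n-1}$. Your even-case computation then goes through verbatim for the padded vector of length $D=2n$. The padded entry contributes nothing to $\sum_{\nu<D}t_\nu\omega^{-\nu}$, so the bound is exactly \zcref{eq:NewmanGeneralised:Combined} with $D=d+1$.
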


Lastly, let us use Shapiro's example~\zcref{eq:FunctionalAbsValue} as a benchmark for \zcref{cor:NewmanGeneralised:Combined}.

\begin{rem}\label{rem:ComparisonAgainstNewmansResult}
	For $d=2n$ and $\boldsymbol{t} = (1_{\times n},0_{\times n}) \in \CC^d$ the right hand side~\zcref{eq:NewmanGeneralised:Combined} turns out to be precisely Newman's bound~\zcref{eq:NewmansBound}.
	However, we cannot quite deduce Newman's result (\zcref{thm:Newman}) from \zcref{cor:NewmanGeneralised:Combined}, because our bound~\zcref{eq:NewmanGeneralised:Combined} is not guaranteed to be sharp.
	The fact that the bound~\zcref{eq:NewmanGeneralised:Combined} may indeed \emph{fail} to be sharp can be seen from the next example (namely the fact that there are white parts in \zcref{fig:UpperBoundQualityTest}).
\end{rem}

\begin{figure}[!ht]%
	\centering%
	\begingroup%
	\def\magnif{4}%
	\colorlet{colorSpy}{black}%
	\colorlet{colorBg}{black!10}%
	\colorlet{colorKnown}{black!20}%
	\colorlet{colorNew}{black!45}%
	\begin{tikzpicture}[
			scale=.1,
			spy using outlines={ circle, magnification=\magnif, size=1cm, connect spies }
		]
		\coordinate (O) at (.5,.5);
		\pgfmathsetmacro{\m}{7}
		\fill[colorBg, opacity=.5] (O) ++(-.5,-.5)  -- ++(.5,0) -- ++(0,.5) \foreach \x in {1,2,...,\m}{ -- ++(1,0) -- ++(0,1) } -- ++(.5,0) -- ++(0,.5) -- ++(-\m-1,0) -- cycle;
		\foreach \x/\y in {%
			2/1, 3/1, 4/1, 4/3, 5/1, 5/2, 6/1, 6/2, 6/5, 7/3, 8/3, 8/7, 9/4, 10/4, 10/9, 11/5, 12/5, 12/11, 13/6, 13/12, 14/6, 14/12, 14/13, 15/7, 15/14, 16/7, 16/14, 16/15, 17/8, 17/16, 18/8, 18/16, 18/17, 19/9, 19/18, 20/9, 20/18, 20/19, 21/10, 21/20, 22/10, 22/20, 22/21, 23/11, 23/22, 24/11, 24/22, 24/23, 25/12, 25/24, 26/12, 26/24, 26/25, 27/13, 27/26, 28/13, 28/26, 28/27, 29/14, 29/28, 30/14, 30/28, 30/29, 31/15, 31/30, 32/15, 32/30, 32/31, 33/16, 33/31, 33/32, 34/16, 34/31, 34/32, 34/33, 35/17, 35/33, 35/34, 36/17, 36/33, 36/34, 36/35, 37/18, 37/35, 37/36, 38/18, 38/35, 38/36, 38/37, 39/19, 39/37, 39/38, 40/19, 40/37, 40/38, 40/39, 41/20, 41/39, 41/40, 42/20, 42/39, 42/40, 42/41, 43/21, 43/41, 43/42, 44/21, 44/41, 44/42, 44/43, 45/22, 45/43, 45/44, 46/22, 46/43, 46/44, 46/45, 47/23, 47/45, 47/46, 48/23, 48/45, 48/46, 48/47, 49/24, 49/47, 49/48, 50/24, 50/47, 50/48, 50/49, 51/25, 51/49, 51/50, 52/25, 52/49, 52/50, 52/51, 53/26, 53/51, 53/52, 54/26, 54/51, 54/52, 54/53, 55/27, 55/53, 55/54, 56/27, 56/53, 56/54, 56/55, 57/28, 57/54, 57/55, 57/56, 58/28, 58/54, 58/55, 58/56, 58/57, 59/29, 59/56, 59/57, 59/58, 60/29, 60/56, 60/57, 60/58, 60/59, 61/30, 61/58, 61/59, 61/60, 62/30, 62/58, 62/59, 62/60, 62/61, 63/31, 63/60, 63/61, 63/62, 64/31, 64/60, 64/61, 64/62, 64/63, 65/32, 65/62, 65/63, 65/64, 66/32, 66/62, 66/63, 66/64, 66/65, 67/33, 67/64, 67/65, 67/66, 68/33, 68/64, 68/65, 68/66, 68/67, 69/34, 69/66, 69/67, 69/68, 70/34, 70/66, 70/67, 70/68, 70/69, 71/35, 71/68, 71/69, 71/70, 72/35, 72/68, 72/69, 72/70, 72/71, 73/36, 73/70, 73/71, 73/72, 74/36, 74/70, 74/71, 74/72, 74/73, 75/37, 75/72, 75/73, 75/74%
		}{
			\coordinate (x) at ($(\x,\y)-(O)$);
			\pgfmathsetmacro{\mx}{int(max(\x,\m))}
			\global\let\m=\mx
			\pgfmathparse{ifthenelse(2*\y+2 == \x || \y+1 == \x, 1, 0)}
			\ifodd\pgfmathresult\relax
				\draw[fill=colorKnown]
					(x) rectangle ++(1,1)
				;
			\else
				\draw[fill=colorNew] (x) rectangle ++(1,1);
			\fi
		}
		\begin{pgfonlayer}{background}
			\fill[colorBg] (O) ++(-.5,-.5)  -- ++(.5,0) -- ++(0,.5) \foreach \x in {1,2,...,\m}{ -- ++(1,0) -- ++(0,1) } -- ++(.5,0) -- ++(0,.5) -- ++(-\m-1,0) -- cycle;
			\draw[step=10, ultra thin] (0,0) grid ++(\m+1,\m+1);
			\pgfmathsetmacro{\n}{20}
			\draw[<-, >=latex, shorten <=2pt] ({2*\n},{\n-1}) -- ++(-30:8) node[font=\footnotesize, below right, inner sep=0pt, fill=white] {\( (2n,n-1) \)};
		\end{pgfonlayer}
		\draw (0,0) rectangle ++(\m+1,\m+1);
		\foreach \x [evaluate=\x as \l using { mod(\x,10)==0 ? 0 : 1 }] in {1,2,...,\m}{
			\coordinate (X\x) at (\x,0);
			\coordinate (x\x) at (\x,\m+1);
			\coordinate (Y\x) at (\m+1,\x);
			\coordinate (y\x) at (0,\x);
			\draw[shift only, very thin, line cap=round]
				(X\x) -- ++(0,{ \l*2pt})
				(Y\x) -- ++(0:{-\l*2pt})
				(x\x) -- ++(0,{-\l*2pt})
				(y\x) -- ++(0:{ \l*2pt})
			;
		}
		\foreach \x [evaluate=\x as \l using {1.8}] in {10,20,...,\m}{
			\draw[shift only, line cap=round]
				(X\x) -- ++(0,{ \l*2pt})
				(Y\x) -- ++(0:{-\l*2pt})
				(x\x) -- ++(0,{-\l*2pt})
				(y\x) -- ++(0:{ \l*2pt})
			;
			\node[font=\tiny, below, inner sep=0, yshift=-4pt] at (X\x) {\(\x\)};
			\node[font=\tiny, left , overlay] at (y\x) {\(\x\)};
		}
		\coordinate (S) at (3,2);
		\coordinate (R) at (20,55);
		\draw[colorBg, shorten <={\magnif*1mm}, line width=5pt] (S) -- (R);
		\fill[colorBg] (R) circle ({\magnif*4cm+10mm});
		\spy[size=32mm, colorSpy] on (S) in node[fill=white] at (R);
		\begin{pgfonlayer}{fg}
			\begin{scope}
				\clip (R) circle ({\magnif*4cm});
				\begin{scope}[shift={($(R)-\magnif*(S)$)}, scale=\magnif, every node/.style={ inner sep=1pt, font=\tiny, right }]
					\node (N) at (0.2, 2.4) {Ineq.\,\zcref{eq:NumericalExperiment:1}};
					\node (n) at (2.8, 4) {Ineq.\,\zcref{eq:NumericalExperiment:2}};
					\path[->, >=latex, shorten >=.5pt, black]
						(N) edge (3,1) (N) edge (5,1) (N) edge (6,1)
						(5,2.5) coordinate (r)
						(r|-n.south) edge (r)
					;
					\foreach \x in {1,2,...,6}{
						\node[font=\tiny, below, inner sep=0, yshift=-4pt] at (\x,0) {\(\x\)};
						\node[font=\tiny, left , overlay, xshift=-2pt] at (0,\x) {\(\x\)};
					}
				\end{scope}
			\end{scope}
		\end{pgfonlayer}
	\end{tikzpicture}%
	\endgroup%
	\caption[]{\label{fig:UpperBoundQualityTest}%
		A numerical experiment showing when \zcref{cor:NewmanGeneralised:Combined} produces stronger upper bounds on the maximum $\mathscr{M}_{n,d}$ from \zcref{prob:Shapiro} than previously known (cf.~\zcref{example:NumericalExperiment}).\\
		The plot shows the points $(d,n)$, $0\leq n<d\leq75$, for which the upper bound~\zcref{eq:NewmanGeneralised:Combined} furnished by \zcref{cor:NewmanGeneralised:Combined} for $\boldsymbol{t} = (1_{\times (n+1)},0_{\times(d-n-1)}) \in \CC^d$ is strictly smaller than the bound $\mathscr{L}_n$ for $\mathscr{M}_{n,d}$ coming from Landau's theorem (see~\zcref{eq:PolHolComparison})%
	}%
\end{figure}

\begin{example}
	\label{example:NumericalExperiment}
	Using a computer, we have investigated when \zcref{cor:NewmanGeneralised:Combined} yields non-trivial information about~\zcref{eq:PolHolComparison}:
	for $0\leq n<d\leq 75$ we have computed the upper bound~\zcref{eq:NewmanGeneralised:Combined} ($\mathscr{C}_{n,d}$ say) furnished by \zcref{cor:NewmanGeneralised:Combined} for $\boldsymbol{t} = (1_{\times (n+1)},0_{\times(d-n-1)}) \in \CC^d$.
	For instance, we obtain the following (hitherto unknown) upper bounds for $\mathscr{M}_{1,d}$ ($d=3,5,6$):
	\begin{equation}\label{eq:NumericalExperiment:1}
		\left.
		\begin{array}{ @{} r @{}c@{} l @{}c@{} c@{}c@{} l }
			\mathscr{M}_{1,3}
			&{}\leq{}& \mathscr{C}_{1,3}
			&{}  = {}& \smash{ \tfrac{1}{2} + \tfrac{1}{\sqrt{2}} }
			&{}  = {}& 1.2071\ldots \\[3pt]
			\mathscr{M}_{1,5}
			&{}\leq{}& \mathscr{C}_{1,5}
			&{}  = {}& \smash{ \tfrac{2}{3} + \tfrac{1}{\sqrt{3}} }
			&{}  = {}& 1.2440\ldots \\[3pt]
			\mathscr{M}_{1,6}
			&{}\leq{}& \mathscr{C}_{1,6}
			&{}  = {}& \smash{ \tfrac{2}{3} + \tfrac{1}{\sqrt{3}} }
		\end{array}
		\right\rbrace
		< 1.25
		= \tfrac{5}{4}
		= \mathscr{L}_1.
	\end{equation}
	Furthermore, we have
	\begin{equation}\label{eq:NumericalExperiment:2}
		\mathscr{M}_{2,5}
		\leq \mathscr{C}_{2,5}
		= \tfrac{4}{3}
		= 1.3\dot{3}
		< 1.390625
		= \tfrac{89}{64}
		= \mathscr{L}_2.
	\end{equation}
	\zcref{fig:UpperBoundQualityTest} shows the tuples $(d,n)$ for which $\mathscr{C}_{n,d} < \mathscr{L}_n$.
	Note that we have the trivial cases $\mathscr{M}_{0,d} = \mathscr{M}_{d-1,d} = 1$, and $\mathscr{M}_{n-1,2n}$ is already covered by Newman's result (recall \zcref{rem:ComparisonAgainstNewmansResult}, though).
	In the figure we have greyed these out.
\end{example}

\section{Plan of the paper}

In~\zcref{sec:PolysAndLandau} we show that Landau's bound~\zcref{eq:LandausBound} is not sharp for polynomials.
In~\zcref{sec:AsymptoticEstimates} we prove \zcref{thm:Quantitative}.
The key result in our approach turns out to be a `quantitative Eneström--Kakeya theorem', stated as \zcref{lem:EnestromKakeya:Quantitative} below.
In~\zcref{sec:Lagrange} we extend Newman's approach.
The key outcome of this is given as \zcref{cor:FunctionalNormComputation} below.
In~\zcref{subsec:ApplicationToFunctionals:CosmeticSurgery}, we use that result to prove \zcref{thm:Newman:Gen} and \zcref{thm:Newman:Gen:Variant}.
Finally, in~\zcref{subsec:ProofOfCorollaries} we prove the two corollaries enunciated in~\zcref{subsec:ShapiroExact} and~\zcref{subsec:GeneralBound}.

Let us also mention a few points about \emph{notation}:
we use both the Landau symbol $O(\,\cdot\,)$ and Vinogradov's notation $\ll$ in the usual fashion.
The implicit constants are always absolute, unless the contrary is explicitly indicated using subscripts (see, \emph{e.g.},~\zcref{eq:LandauExtremizer:PowerSeriesExpansion:CoeffGrowth}).

~~~~~~~~~~~~~~~~~~~~~~~~~~~~~~~~~~~~~~~~~~~~~~~~~~~~~~~
\section{Polynomials and Landau's bound}
\label{sec:PolysAndLandau}

\subsection{Landau's argument}

In this section we give a self-contained proof that Landau's bound~\zcref{eq:LandausBound} is not sharp for polynomials.
Of course, this result is not new.
In fact, it follows directly from Landau's characterisation of the extremal functions in \zcref{thm:Landau} (see~\cite{landau1913}, but not~\cite{landau1916}, where the required characterisation is omitted).
Nevertheless, we believe that revisiting Landau's argument is instructive.
We return to this point in~\zcref{subsec:ReflectionsOnLandau} below.

\begin{prop}\label{prop:LandauNotSharpForPolys}
	Let $d>n\geq 1$ be integers.
	Then there is some $\delta(n,d)>0$ such that for every polynomial $f = \sum_\nu a_\nu X^\nu \in\CC[X]^{<d}$ with $\norm{f}_\infty \leq 1$, one has
	\begin{equation}
		\label{eq:LandauNotSharpForPolys}
		\abs{ a_0+a_1+\ldots+a_n }
		\leq \sum_{\nu=0}^n \binom{-1/2}{\nu}^2 \mkern-3mu - \delta(n,d).
	\end{equation}
\end{prop}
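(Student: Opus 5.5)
Since the paper already records that this follows from Landau's characterisation of extremisers, the plan is to give a self-contained compactness argument together with a direct proof that no polynomial attains equality. By compactness, the set $K=\set{f\in\CC[X]^{<d}}[\norm{f}_\infty\le1]$ is compact in a finite-dimensional space, and $f\mapsto\abs{a_0+\ldots+a_n}$ is continuous, so the maximum $\mathscr{M}_{n,d}$ is attained. By \zcref{thm:Landau} we have $\mathscr{M}_{n,d}\le\mathscr{L}_n$. It therefore suffices to show that no polynomial of degree $<d$ attains $\mathscr{L}_n$, and then to take $\delta(n,d)=\mathscr{L}_n-\mathscr{M}_{n,d}>0$.

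To prove strictness without quoting the uniqueness part of \zcref{thm:Landau}, I would revisit Landau's argument. Write $Q_n(z)=\sum_{\nu\le n}c_\nu z^\nu$ for the $n$-th partial sum of $(1-z)^{-1/2}$, so $c_\nu=(-1)^\nu\binom{-1/2}{\nu}$ and $P_n=Q_n$. Write $S_n(f)=\sum_{\nu\le n}a_\nu$. The key identity is that $S_n(f)$ equals the coefficient of $z^n$ in $f(z)/(1-z)=f(z)\cdot(1-z)^{-1/2}\cdot(1-z)^{-1/2}$. Truncating, this is the coefficient of $z^n$ in $f(z)Q_n(z)^2$, which equals
\[
\frac1{2\pi}\int_0^{2\pi} f(e^{\I\theta})Q_n(e^{\I\theta})^2e^{-\I n\theta}\dd\theta .
\]
Hence
\[
\abs{S_n(f)}\le\frac1{2\pi}\int\abs{Q_n}^2=\sum c_\nu^2=\mathscr{L}_n .
\]

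Now examine the equality case. Equality forces $\abs{f}=1$ almost everywhere on the circle where $Q_n\ne0$. Since $Q_n$ has finitely many zeros, this gives $\abs{f}=1$ on the whole circle by continuity. Equality also forces $f Q_n^2 e^{-\I n\theta}$ to have constant argument, i.e. $f(e^{\I\theta})=\eta\,\overline{Q_n(e^{\I\theta})}^{\,2}e^{\I n\theta}/\abs{Q_n}^2=\eta\, e^{\I n\theta}\,\overline{Q_n}/Q_n$ on the circle, where $\eta$ is unimodular. Because $Q_n$ has real coefficients, $\overline{Q_n(e^{\I\theta})}=Q_n(e^{-\I\theta})$, so on the circle $f=\eta z^nQ_n(1/z)/Q_n(z)=\eta f_n$. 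A polynomial agreeing with a rational function on the circle equals it identically. So it remains to show that $f_n$ is not a polynomial when $n\ge1$.

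This last point is the main obstacle. I would argue via zeros: $z^nQ_n(1/z)$ and $Q_n(z)$ have reciprocal zero sets. The coefficients $c_0=1>c_1>\ldots>c_n>0$ are strictly decreasing and positive, so by the Eneström--Kakeya theorem all zeros of $Q_n$ lie in $\abs{z}>1$ (strictly, I would check using strict monotonicity). Then the zeros of $z^nQ_n(1/z)$ lie in $\abs{z}<1$, so there is no cancellation, and $f_n$ has $n\ge1$ genuine poles. Hence $f_n$ is not a polynomial, and no admissible $f$ attains $\mathscr{L}_n$, which completes the proof.
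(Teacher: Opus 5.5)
Your proof is correct. Up to the equality analysis it is the paper's argument: compactness, the identity $a_0+\ldots+a_n=[z^n]\,fP_n^2$ (your $Q_n$ is the paper's $P_n$), trivial estimation plus Parseval, and the observation that equality forces $\abs{f}=1$ on the whole unit circle.

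The routes diverge only at the last step.

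\textbf{The paper's route.} It uses only the modulus information. By \zcref{lem:PolyAbsOneOnUnitCircle}, a polynomial that is unimodular on the circle is a monomial. Hence $\abs{a_0+\ldots+a_n}\le 1<\mathscr{L}_n$. No information on the zeros of $P_n$ is needed beyond their finiteness.

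\textbf{Your route.} You also use the phase condition from equality in $\abs{\int g}\le\int\abs{g}$. This pins down $f=\eta f_n$, so you are in effect reproving, for polynomials, the uniqueness half of Landau's theorem. You then need the extra fact that $f_n$ is not a polynomial, which you get from Eneström--Kakeya. That step is sound. In the ratio form of the lemma the smallest ratio is $c_{n-1}/c_n=2n/(2n-1)>1$, so the strictness you wanted to check is automatic; this is \zcref{cor:LandauExtremizer:PoleBound}.

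\textbf{Comparison.} Your approach gives more, since it identifies the would-be extremiser. It costs an ingredient the paper avoids, and that ingredient is in fact dispensable. From the polynomial identity $fP_n=\eta X^nP_n(1/X)$, and since $X^nP_n(1/X)$ has degree exactly $n$ (its top coefficient is $c_0=1$), you get $\deg f=0$. A constant $f$ with $\norm{f}_\infty\le 1$ trivially gives $\abs{a_0}\le 1<\mathscr{L}_n$.
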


\begin{proof}
	By compactness of the unit ball in $(\CC[X]^{<d}, \norm{\,\cdot\,}_\infty)$, there exists a polynomial $f = \sum_\nu a_\nu X^\nu \in\CC[X]^{<d}$ with $\norm{f}_\infty \leq 1$ for which~\zcref{eq:LandausBound} is maximal.
	Let $f$ be such a polynomial.
	Furthermore, let $P_n$ be given as in~\zcref{eq:LandausBound:Extremizer}.
	Then, by means of comparison with the binomial series expansion
	\[
		\parentheses[\bigg]{ \sum_{\nu=0}^\infty \binom{-1/2}{\nu} (-z)^\nu }^2
		= \parentheses[\bigg]{ \frac{1}{\sqrt{1-z}} }^2
		= \frac{1}{1-z}
		= \sum_{\nu=0}^\infty z^\nu
		\quad(\text{for } z\in\CC,\,\abs{z}<1),
	\]
	it follows that
	\[
		P_n^2(z) = 1 + z + z^2 + \ldots + z^n + \text{terms of higher order}
		\quad(\text{for } z\in\CC),
	\]
	so that Cauchy's formula, trivial estimation and Parseval's identity yield
	\begin{align}
		\label{eq:LandauNotSharpForPolys:Proof}
		\abs{ a_0 + a_1 + \ldots + a_n }
		= \abs[\bigg]{ {\frac{1}{2\pi\I} \oint_{\abs{z}=1} \frac{f(z)}{z^{n+1}} P_n^2(z) \dd{z}} } &
		\leq {\int_0^1 \abs{P_n(\exp(2\pi \I \mkern1mu t))}^2 \dd{t}} \\ &
		\nonumber
		= \text{right hand side of~\zcref{eq:LandausBound}}.
	\end{align}
	Therefore, in order to establish the existence of $\delta(n,d)>0$ as in~\zcref{eq:LandauNotSharpForPolys}, it suffices to observe that `$\leq$' in~\zcref{eq:LandauNotSharpForPolys:Proof} actually holds with `$<$'.
	This is simple, for if we had equality in~\zcref{eq:LandauNotSharpForPolys:Proof} then $\abs{f(z)} = 1$ for all $\abs{z}=1$ (in the first place, at least at the points $z$, $\abs{z}=1$, where $P_n$ does not vanish,\footnote{In fact, $P_n$ is non-vanishing throughout the closed unit disk; see \zcref{cor:LandauExtremizer:PoleBound} below.} but then for all $\abs{z}=1$ by reason of continuity).
	However, this can only happen if $f$ is a monomal (see \zcref{lem:PolyAbsOneOnUnitCircle} below); yet in this case $\abs{ a_0 + a_1 + \ldots + a_n } = \norm{f} = 1$, but the right hand side of~\zcref{eq:LandausBound} is $>1$.
\end{proof}

\begin{rem}
	It follows from a general result of Shapiro, \cite[Theorem~13 in~§\,2.7]{shapiro1952}, that $\delta(n,d)\to 0$ for fixed $n$ and $d\to\infty$.
	Our \zcref{thm:Quantitative} relaxes the condition on $n$.
\end{rem}

The following result (used in the proof of \zcref{prop:LandauNotSharpForPolys}) is a special case of the well-known classification of rational functions of modulus one on the unit circle (see, \emph{e.g.},~\cite[Ch.~14, Ex.~3 on p.~293]{rudin1987}).
We include a proof (without any claim of novelty) both for completeness' sake and for lack of a suitable reference.
\begin{lem}\label{lem:PolyAbsOneOnUnitCircle}
	Let $f\in\CC[X]$ be a polynomial such that $\abs{f(z)}=1$ for all $z\in\CC$ with $\abs{z}=1$.
	Then $f$ is a monomial.
\end{lem}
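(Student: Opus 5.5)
The plan is the standard reflection trick. Write $f = \sum_{\nu=0}^{m} a_\nu X^\nu$ with $a_m \neq 0$; we may assume $f \neq 0$, which is automatic since $\abs{f}=1$ on the circle. Introduce the reversed conjugate polynomial
\[
	f^*(z) = z^m \overline{f(1/\bar z)} = \sum_{\nu=0}^m \overline{a_{m-\nu}}\, z^\nu .
\]
For $\abs{z}=1$ we have $1/\bar z = z$, so $f(z) f^*(z) = z^m \abs{f(z)}^2 = z^m$ there.

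Both $f f^*$ and $X^m$ are polynomials, and they agree on the infinite set $\set{\abs{z}=1}$. Hence $f f^* = X^m$ identically in $\CC[X]$. Unique factorisation in $\CC[X]$, or simply comparing zeros, then shows that $f$ divides $X^m$. So $f = c X^k$ for some constant $c$ and some $k \le m$, and since $a_m \neq 0$ we get $k = m$. Finally, $\abs{f} = 1$ on the circle forces $\abs{c} = 1$, although the lemma only asks that $f$ be a monomial.

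An alternative route that avoids the reversal is to compare Fourier coefficients of $\abs{f(e^{\I\theta})}^2 = 1$. The coefficient of $e^{\I m\theta}$ is $a_m \overline{a_0} = 0$, so $a_0 = 0$. One then factors out $z$ and inducts on the degree, using that $\abs{z}=1$ on the circle.

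The only point requiring care is the identity-theorem step, namely that two polynomials agreeing on the unit circle are equal. This is immediate because a nonzero polynomial has finitely many roots, so I expect no real obstacle.
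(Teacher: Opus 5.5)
Your proof is correct and is essentially the paper's argument. The paper shows that the rational function $\bar{f}(1/X)\,f(X)$ equals $1$ on the circle and hence identically, so that a nonzero root of $f$ would give $1=0$; you multiply through by $X^m$ (note $f^* = X^m\,\bar{f}(1/X)$) to obtain the polynomial identity $f f^* = X^m$, and then conclude by divisibility.
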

\begin{proof}
	Let $\bar{f}$ be the polynomial obtained from $f$ by replacing each coefficient by its complex conjugate.
	For $\abs{z}=1$ we have $\overline{z} = 1/z$.
	Consequently, the rational function $\bar{f}(1/X) f(X)$ equals one: (a)~on the unit circle and, \emph{a~fortiori}, (b)~everywhere (by the identity principle, for instance).
	Now if $f$ were to have a non-zero root $z\in\CC$, then we reap the contradiction $1 = \bar{f}(1/z) f(z) = 0$.
	Hence, the zero locus of $f$ is either~$\emptyset$ or~$\set{0}$.
	In both cases, it follows that $f$ is a monomial.
\end{proof}

\subsection{Reflections on Landau's argument}
\label{subsec:ReflectionsOnLandau}

The key to \zcref{prop:LandauNotSharpForPolys} may be described as resting on two observations:
\begin{enumerate}
	\item The expression in question, $a_0+a_1+\ldots+a_n$, admits a nice integral representation, namely
	\begin{equation}\label{eq:CauchyApplied}
		a_0+a_1+\ldots+a_n
		= {\frac{1}{2\pi\I} \oint_{\abs{z}=1} \frac{f(z)}{z^{n+1}} \parentheses[\big]{ 1 + z + z^2 + \ldots + z^n } \dd{z}}.
	\end{equation}
	\item The `integration kernel' $1 + z + z^2 + \ldots + z^n$ in~\zcref{eq:CauchyApplied} can be replaced by $P_n^2(z)$ (recall~\zcref{eq:LandauNotSharpForPolys:Proof}).
	In the holomorphic case (\zcref{thm:Landau}) one can, in fact, use this to get \emph{sharp} bounds via the functions $f_n$ given in~\zcref{eq:LandausBound:Extremizer}.
\end{enumerate}
It is clear that this approach does not readily generalise to determine the maximum $\mathscr{M}_{n,d}$ of the left hand side of~\zcref{eq:LandausBound} as $f$ ranges over all $1$-bounded \emph{polynomials} of degree $<d$: what we lack is sufficient understanding of the precise boundary behaviour of the corresponding extremal polynomials $f$.

Despite not having the required understanding of $f(z)$ for \emph{all} $\abs{z}=1$, it is possible to get control on a finite number of such $z$ using an interpolation-based approach.
In this way one can get results when the integral formula~\zcref{eq:CauchyApplied} can be replaced by a finite sum (involving point evaluations of $f$) with `especially few' terms.
Such phenomena of replacing integration of polynomials by finite sums with few terms is a well-known theme in numerical analysis, namely \emph{Gauss quadrature}.
In our case, this discrete analogue of~\zcref{eq:CauchyApplied} is given in the form of~\zcref{eq:FunctionalPointEvalDecomp} in \zcref{cor:FunctionalNormComputation} below.
We return to this in~\zcref{sec:Lagrange}, when we discuss the exact formul\ae{} presented in~\zcref{subsec:ShapiroExact}.

\section{Asymptotic estimates}
\label{sec:AsymptoticEstimates}

\subsection{Proof of \texorpdfstring{\zcref{thm:Quantitative}}{Theorem~\autoref{thm:Quantitative}}}
\label{sec:thm:Quantitative:Proof}

The strategy is to approximate Landau's extremal function $f_n$ by polynomials; this natural idea is already found in~\cite[Theorem~13 in~§\,2.7]{shapiro1952}, where it is applied to a more general extremal problems.
In the generality of the cited result, one cannot hope to achieve error bounds as explicit as the ones we obtain here.
Our approach hinges upon bounding away the poles of $f_n$ from the unit disk\ifshowfigures{} (\emph{cf}.~\zcref{fig:LandauExtremizer:PoleBound})\fi.
To achieve this, we shall employ the well-known \emph{Eneström--Kakeya theorem}~\cite{enestrom1893,kakeya1912} (see also~\cite[p.~13]{borwein1995}):
\begin{lem}[{Eneström, 1893; Kakeya, 1912}]
	\label{lem:EnestromKakeya}
	Let $a_n X^n + \ldots + a_1 X + a_0$ be a polynomial with positive coefficients $a_0 \geq a_1 \geq \ldots \geq a_n > 0$.
	Then all its complex zeros are contained in the (closed) annulus with radii $\min Q$ and $\max Q$; here
	\(
		Q = \set{ a_\nu / a_{\nu+1} }[ 0\leq \nu<n ]
	\).
\end{lem}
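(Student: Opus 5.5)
The plan is the classical Eneström argument: multiply by $(1-X)$, which turns the monotone coefficient sequence into a sum of nonnegative differences, and then use the triangle inequality twice. By homogeneity we may scale $p(X)=a_0+a_1X+\ldots+a_nX^n$ freely. First we show that $p$ has no zeros in the open disk $\abs{z}<\min Q$. Then we apply this to the reversed polynomial to get the outer bound.

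\emph{Inner radius.} Put $r=\min Q$, so that $a_\nu\geq r\,a_{\nu+1}$ for $0\leq\nu<n$. Consider
\[
	(1-X)\,p(X) = a_0 - \sum_{\nu=1}^{n}(a_{\nu-1}-a_\nu)X^\nu - a_nX^{n+1}.
\]
That alone only gives the radius $1$, the plain monotone case. To get $r$ we rescale: set $q(X)=p(rX)$. The coefficients of $q$ are $b_\nu=a_\nu r^\nu$, and $b_\nu-b_{\nu+1}=r^\nu(a_\nu-r a_{\nu+1})\geq0$, so $(b_\nu)$ is nonincreasing and positive. We now show that $q$ has no zeros in $\abs{z}<1$. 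Suppose $\abs{z}<1$ and $z\neq 1$. Then the triangle inequality gives
\[
	\abs{(1-z)q(z)} \geq b_0-\sum_{\nu=1}^n(b_{\nu-1}-b_\nu)\abs{z}^\nu - b_n\abs{z}^{n+1}.
\]
The subtracted terms have total weight at most $\sum_{\nu=1}^n(b_{\nu-1}-b_\nu)+b_n=b_0$. Since $\abs{z}<1$ and at least one weight is positive, the subtracted quantity is strictly less than $b_0$. Hence $q(z)\neq0$. (The point $z=1$ lies outside $\abs{z}<1$, so it needs no treatment.) Thus every zero of $p$ satisfies $\abs{z}\geq r$.

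\emph{Outer radius.} Put $R=\max Q$, so that $a_\nu\leq R\,a_{\nu+1}$ for $0\leq\nu<n$. The rescaling $q(X)=p(RX)$ has coefficients $b_\nu=a_\nu R^\nu$, which are now nondecreasing: $b_{\nu+1}-b_\nu=R^\nu(Ra_{\nu+1}-a_\nu)\geq0$. Its reversal $X^nq(1/X)$ therefore has nonincreasing positive coefficients. By the argument of the previous paragraph with $r=1$, the reversal has no zeros in $\abs{w}<1$. Since $0$ is not a zero of $q$ (because $b_0>0$), every zero $z$ of $q$ satisfies $\abs{z}\leq1$. Hence every zero of $p$ satisfies $\abs{z}\leq R$.

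The only point needing care is the strictness in the triangle-inequality step. It holds because $b_n>0$ always contributes the term $b_n\abs{z}^{n+1}<b_n$. Everything else is routine bookkeeping.
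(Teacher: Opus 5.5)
Your proof is correct, and it is essentially the classical argument the paper itself uses, following Borwein--Erd\'elyi, in proving its quantitative Enestr\"om--Kakeya theorem: rescale by $r$ (resp.\ $R$), multiply by $(1-\zeta)$, and apply the triangle inequality with the telescoping nonnegative weights summing to $a_0$. The only differences are cosmetic. For the outer radius the paper estimates $\abs{(1-\zeta)p(R\zeta)+a_nR^n\zeta^{n+1}}$ directly for $\abs{\zeta}\geq 1$ rather than passing to the reversed polynomial, and it keeps the sharper bound $a_0\abs{\zeta}$ instead of mere strictness, which is what yields its quantitative inequalities.
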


\ifshowfigures%
\begingroup%
\newcommand{\drawFigure}[3][1.15]{%
	\begin{tikzpicture}[scale=#1] 
		\pgfmathsetmacro{\n}{#2}
		\pgfmathsetmacro{\w}{4*#1}
		\coordinate (O) at (0,0);
		\coordinate (A) at ( 1.9,-1.6);
		\coordinate (B) at (-1.9, 1.6);
		\begin{scope}
			\clip (A) rectangle (B);
			\node at (0,0) {\includegraphics[width=\w cm]{plot-f#2.jpg}};
			\fill[even odd rule, white, opacity=.1] (O) circle (1) circle (3);
			\draw (O) circle (1);
			\draw[dashed] (O) circle ({1+1/(2*\n+1)});
			\foreach \a/\r in {#3}{
				\coordinate (P) at ({\a}:{1*\r});
				\coordinate (Z) at ({\a}:{1/\r});
				\draw[shift only, fill=black] (Z) circle (2pt);
				\draw[shift only, fill=white] (P) circle (2pt);
			}
		\end{scope}
		\draw (A) rectangle (B);
		\node[below right, font=\small] at (B) {\( n = #2 \):};
	\end{tikzpicture}
}%
\begin{figure*}[!ht]%
	\centering%
	\drawFigure{4}{%
		-141.958/1.42246,%
		141.958/1.42246,%
		-65.9057/1.34441,%
		65.9057/1.34441%
	}%
	\quad%
	\drawFigure{7}{%
		180/1.28518,%
		-133.456/1.27823,%
		133.456/1.27823,%
		-86.9059/1.25484,%
		86.9059/1.25484,%
		-40.363/1.20159,%
		40.363/1.20159%
	}%
	\quad%
	\drawFigure{17}{%
		180/1.14205,%
		-159.707/1.14154,%
		159.707/1.14154,%
		-139.414/1.14,%
		139.414/1.14,%
		-119.121/1.1373,%
		119.121/1.1373,%
		-98.8258/1.13325,%
		98.8258/1.13325,%
		-78.5288/1.12747,%
		78.5288/1.12747,%
		-58.2285/1.1192,%
		58.2285/1.1192,%
		-37.9226/1.10666,%
		37.9226/1.10666,%
		-17.6166/1.084,%
		17.6166/1.084%
	}%
	\caption{\label{fig:LandauExtremizer:PoleBound}%
		Zeros (black dots) and poles (white dots) of $f_n$ from~\zcref{eq:LandausBound:Extremizer} for $n\in\set{4,7,17}$.\\
		In each picture the two circles are centred about~$0$ and have radii $1$ and $1 + 1/(2n+1)$, respectively
	}%
\end{figure*}
\endgroup%
\fi%

\begin{cor}\label{cor:LandauExtremizer:PoleBound}
	The poles $\zeta\in\CC$ of $f_n$ given in~\zcref{eq:LandausBound:Extremizer} satisfy
	\(\displaystyle
		\abs{\zeta} \geq 1 + \frac{1}{2n+1}
	\).
\end{cor}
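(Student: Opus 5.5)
The poles of $f_n = z^n P_n(1/z)/P_n(z)$ lie among the zeros of $P_n$; any common zeros with the numerator only cancel poles. So it suffices to show that every zero $\zeta$ of $P_n$ satisfies $\abs{\zeta}\geq 1+1/(2n+1)$. I plan to do this with \zcref{lem:EnestromKakeya} applied directly to $P_n$.

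By~\zcref{eq:BinomialCoefficient},
\[
	P_n(z) = \sum_{\nu=0}^n c_\nu z^\nu,
	\qquad
	c_\nu = \binom{-1/2}{\nu}(-1)^\nu = \frac{1\cdot 3\cdots(2\nu-1)}{2\cdot 4\cdots 2\nu} > 0 .
\]
So the coefficients are positive, and $c_0=1$.

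Next I check monotonicity and compute the ratios. We have
\[
	\frac{c_{\nu+1}}{c_\nu} = \frac{2\nu+1}{2\nu+2} < 1,
\]
so $c_0\geq c_1\geq\ldots\geq c_n>0$, as \zcref{lem:EnestromKakeya} requires. The set of ratios is
\[
	Q = \set{ c_\nu/c_{\nu+1} }[ 0\leq\nu<n ],
	\qquad
	\frac{c_\nu}{c_{\nu+1}} = \frac{2\nu+2}{2\nu+1} = 1+\frac{1}{2\nu+1}.
\]
These ratios decrease in $\nu$. Hence $\min Q$ is attained at $\nu=n-1$ and equals $1+1/(2n-1)$.

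\zcref{lem:EnestromKakeya} now places every zero of $P_n$ in the annulus with inner radius $1+1/(2n-1)$. Since $1+1/(2n-1)\geq 1+1/(2n+1)$, this is stronger than the claim. (For $n=0$ there is nothing to prove: $P_0=1$ has no zeros and $f_0$ has no poles.) The same bound gives the non-vanishing of $P_n$ on the closed disk used in the footnote to \zcref{prop:LandauNotSharpForPolys}.

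There is no substantial obstacle. The only points to keep straight are the orientation of the ratios $a_\nu/a_{\nu+1}$ in the lemma, the fact that the relevant radius is the inner one, $\min Q$, and the remark that poles of $f_n$ can only come from zeros of $P_n$. The slack between $2n-1$ and $2n+1$ suggests the author simply stated a convenient weaker constant.
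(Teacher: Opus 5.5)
Your proposal is correct and is essentially the paper's own argument: apply the Eneström--Kakeya theorem to $P_n$, whose coefficients $(-1)^\nu\binom{-1/2}{\nu}$ are positive and decreasing with ratios $(2\nu+2)/(2\nu+1)$. Your extra observation is also right: the minimum over $0\leq\nu<n$ is $1+1/(2n-1)$, so the stated constant $1+1/(2n+1)$ is slightly weaker than what the theorem actually delivers.
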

\begin{proof}
	This follows from the definition~\zcref{eq:LandausBound:Extremizer} of $f_n$,~\zcref{eq:BinomialCoefficient}, \zcref{lem:EnestromKakeya}, and
	\[
		- \binom{-1/2}{\nu} \bigg/ \binom{-1/2}{\nu+1} = \frac{2\nu+2}{2\nu+1}.
		\qedhere
	\]
\end{proof}

From the above one easily obtains decay estimates for the Taylor coefficients of the function $f_n$.
An unpleasant feature of this rather na\"ive approach is that it does not yield an explicit dependence on $n$.
This is a nuisance for our purpose, which we mitigate in \zcref{lem:LandauExtremizer:PowerSeriesExpansion:CoeffGrowth:Explicit}.

\begin{cor}\label{lem:LandauExtremizer:PowerSeriesExpansion:CoeffGrowth}
	Fix an integer $n\geq 1$ and let
	\begin{equation}
		\label{eq:LandauExtremizer:PowerSeriesExpansion}
		f_n(z) = \sum_{\nu=0}^\infty b_{n,\nu} z^\nu
	\end{equation}
	be the power series expansion (about the origin) of the rational function $f_n$ from~\zcref{eq:LandausBound:Extremizer}.
	Then, for every $\epsilon\in(0,1)$,
	\begin{equation}
		\label{eq:LandauExtremizer:PowerSeriesExpansion:CoeffGrowth}
		b_{n,\nu} \ll_{n,\epsilon} \parentheses*{1+\frac{1}{2n+1}-\epsilon}^{-\nu}.
	\end{equation}
\end{cor}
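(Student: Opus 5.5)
The estimate will follow from the radius of convergence of the Taylor series of $f_n$, via the Cauchy--Hadamard formula or, more concretely, via Cauchy's estimates on a circle slightly smaller than the nearest pole. Set $\rho = 1 + \frac{1}{2n+1}$. By \zcref{cor:LandauExtremizer:PoleBound}, every pole of the rational function $f_n = z^n P_n(1/z)/P_n(z)$ has modulus at least $\rho$. Since $P_n(0)=1\neq 0$, the function $f_n$ is holomorphic on the open disk $\set{\abs{z}<\rho}$, and the expansion \zcref{eq:LandauExtremizer:PowerSeriesExpansion} converges there.

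Now fix $\epsilon\in(0,1)$. If $\rho-\epsilon\leq 0$, then the right-hand side of \zcref{eq:LandauExtremizer:PowerSeriesExpansion:CoeffGrowth} is not meaningful as stated, so we may assume $r \coloneqq \rho - \epsilon > 0$. This holds automatically once $\epsilon < 1$, because $\rho > 1$. The closed disk $\abs{z}\leq r$ lies strictly inside the disk of holomorphy, since $r<\rho$. Hence $f_n$ is continuous on the compact circle $\abs{z}=r$, and
\[
	M(n,\epsilon) \coloneqq \max_{\abs{z}=r} \abs{f_n(z)}
\]
is finite. It depends only on $n$ and $\epsilon$.

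Cauchy's integral formula for the coefficients,
\[
	b_{n,\nu} = \frac{1}{2\pi\I}\oint_{\abs{z}=r} \frac{f_n(z)}{z^{\nu+1}}\dd{z},
\]
then yields by trivial estimation
\[
	\abs{b_{n,\nu}} \leq M(n,\epsilon)\, r^{-\nu} = M(n,\epsilon)\parentheses*{1+\frac{1}{2n+1}-\epsilon}^{-\nu}.
\]
This is exactly \zcref{eq:LandauExtremizer:PowerSeriesExpansion:CoeffGrowth}, with implied constant $M(n,\epsilon)$.

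There is no real obstacle here. The only point requiring care is that $f_n$ has no poles in the closed disk of radius $r$, and this is precisely what \zcref{cor:LandauExtremizer:PoleBound} provides. The implied constant is inexplicit, because $M(n,\epsilon)$ depends on how close the poles come to the circle $\abs{z}=r$. As the text notes, this is the nuisance that is to be removed later in \zcref{lem:LandauExtremizer:PowerSeriesExpansion:CoeffGrowth:Explicit}.
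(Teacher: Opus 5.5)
Your proof is correct and is essentially the paper's argument: both rest on the pole bound of \zcref{cor:LandauExtremizer:PoleBound}, after which the paper invokes the Cauchy--Hadamard formula for the radius of convergence, while you apply Cauchy's estimate on the circle $\abs{z} = 1 + \frac{1}{2n+1} - \epsilon$. Your version gives the bound for every $\nu$ directly, with the constant $\max_{\abs{z}=r}\abs{f_n(z)}$, and it is the same route the paper later takes in \zcref{lem:LandauExtremizer:PowerSeriesExpansion:CoeffGrowth:Explicit}.
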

\begin{proof}
	The radius $R$ of convergence of the power series expansion~\zcref{eq:LandauExtremizer:PowerSeriesExpansion} is precisely $\min_{\zeta} \mkern -2mu {\abs{\zeta}}$, where $\zeta$ ranges over the poles of $f_n$.
	The lemma now follows upon combinding~\zcref{cor:LandauExtremizer:PoleBound} and the well-known Cauchy--Hadamard formula for $R$.
\end{proof}

\begin{proof}[Proof of \zcref{thm:Quantitative}]
	Suppose for the moment that we had a version of~\zcref{eq:LandauExtremizer:PowerSeriesExpansion:CoeffGrowth} at our disposal in which the dependence of the implied constant was spelled out in terms of $n$.
	Precisely, we shall work with the inequality
	\begin{equation}\label{eq:ExplicitCoeffBound}
		\abs{b_{n,\nu}} \ll n^2 \parentheses*{1+\frac{1}{4n}}^{-\nu}.
	\end{equation}
	which we establish in \zcref{lem:LandauExtremizer:PowerSeriesExpansion:CoeffGrowth:Explicit} below.
	
	We shall employ~\zcref{eq:ExplicitCoeffBound} to see how well the polynomial
	\[
		f_{n,d} = \sum_{\nu=0}^{d-1} b_{n,\nu} X^\nu.
	\]
	approximates $f_n$ (see~\zcref{eq:LandauExtremizer:PowerSeriesExpansion}).
	Indeed, for all $\abs{z}\leq1$,
	\[
		\abs{ f_n(z) - f_{n,d}(z) }
		= \abs[\bigg]{ \sum_{\nu=d}^\infty b_{n,\nu} z^\nu }
		\ll n^3 \parentheses*{1+\frac{1}{4n}}^{-d}
		\ll n^3 \exp\parentheses*{-\frac{d}{5n}}
		\eqqcolon E(n,d).
	\]
	In particular,
	\[
		\norm{f_{n,d}}_\infty
		\leq \norm{f_n}_\infty + O(E(n,d))
		= 1 + O(E(n,d)).
	\]
	Therefore, $\tilde{f}_{n,d} = f_{n,d} / \norm{f_{n,d}}_\infty$ is a polynomial with $\norm{\tilde{f}_{n,d}}_\infty = 1$.
	If $d>n$, then the first $n+1$ coefficients of the polynomial $f_{n,d}$ sum to the right hand side of~\zcref{eq:LandausBound}.
	In particular, we have constructed a polynomial of degree $<d$, bounded by $1$ in absolute value on the unit disk, whose first $n+1$ coefficients sum to
	\[
		\frac{1}{\norm{f_{n,d}}_\infty} \mathscr{L}_n
		\geq \frac{1}{1 + O(E(n,d))} \mathscr{L}_n
		\geq \parentheses[\big]{ 1 - O\parentheses{E(n,d)} } \mkern 2mu \mathscr{L}_n
	\]
	in absolute value.
\end{proof}

\subsection{A quantitative Eneström--Kakeya theorem}

For our desired explicit version of \zcref{lem:LandauExtremizer:PowerSeriesExpansion:CoeffGrowth}, the original Eneström--Kakeya theorem is insufficient.
Fortunately, it is an easy matter to extract a suitable quantitative statement of the standard proof of said theorem.
Despite its easy proof, we believe that the result obtained may be of independent interest:
\begin{thm}[{Quantitative Eneström--Kakeya}]
	\label{lem:EnestromKakeya:Quantitative}
	Let $p = a_n X^n + \ldots + a_1 X + a_0$ be a polynomial with positive coefficients $a_0 \geq a_1 \geq \ldots \geq a_n > 0$.
	Let $Q = \set{ a_\nu / a_{\nu+1} }[ 0\leq \nu<n ]$ and put $r = \min Q \leq \max Q = R$.
	Then, for $z\in\CC\setminus\set{r,R}$, we have the following inequalities:
	\begin{enumerate}[label={\normalfont(\arabic*)}, ref={(\arabic*)}]
		\item\label{enum:EnestromKakeya:Quantitative:Combined:Inner} \(\displaystyle
			\frac{r-\abs{z}}{\abs{r-z}} \leq \frac{\abs{p(z)}}{a_0} \leq \frac{r+\abs{z}}{\abs{r-z}}
		\) for $\abs{z}\leq r$,
		\smallskip
		\item\label{enum:EnestromKakeya:Quantitative:Combined:Outer} \(\displaystyle
			\frac{\abs{z}-R}{\abs{z-R}} \leq \frac{\abs{p(z)}}{a_n \abs{z}^n} \leq \frac{R+\abs{z}}{\abs{R-z}}
		\) for $\abs{z}\geq R$.
	\end{enumerate}
\end{thm}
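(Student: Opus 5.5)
Both inequalities come from the standard Eneström--Kakeya trick of multiplying $p$ by a linear factor and bounding the resulting polynomial by a telescoping sum. I will prove~\ref{enum:EnestromKakeya:Quantitative:Combined:Inner} directly and obtain~\ref{enum:EnestromKakeya:Quantitative:Combined:Outer} from it by passing to the reversed polynomial.

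\textbf{Inner estimate.} Consider
\[
	(r-z)\,p(z) = r a_0 + \sum_{\nu=1}^{n} (r a_\nu - a_{\nu-1}) z^\nu - a_n z^{n+1},
\]
and write it as $r a_0 - g(z)$ with
\[
	g(z) = \sum_{\nu=1}^n (a_{\nu-1} - r a_\nu) z^\nu + a_n z^{n+1}.
\]
By the definition of $r$ we have $a_{\nu-1} \geq r a_\nu$, so every coefficient of $g$ is nonnegative. Hence for $\abs{z}\leq r$ we get $\abs{g(z)} \leq g(\abs{z}) \leq g(r)$, since $g$ has nonnegative coefficients and is increasing on $[0,\infty)$. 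But $g(r) = r a_0 - (r-r)p(r) = r a_0$, which gives only the crude bound $\abs{g(z)}\leq r a_0$. That is too weak, because it produces a lower bound of $0$.

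To do better I will use the sharper estimate $g(\abs{z}) = r a_0 - (r-\abs{z})\,p(\abs{z})$, obtained from the same identity with $z$ replaced by the real number $\abs{z}$. I also need $p(\abs{z}) \geq a_0$ for the lower bound, and that fails in general. So I refine: for $0\le s:=\abs{z} \le r$, I want to bound $g(s)$ by $s a_0$. Indeed $g(s)/s = \sum_{\nu=1}^n (a_{\nu-1}-r a_\nu)s^{\nu-1} + a_n s^n$ has nonnegative coefficients, so it is increasing in $s$, and at $s=r$ it equals $a_0$. Therefore $\abs{g(z)} \leq g(s) \leq s a_0$. Then
\[
	r a_0 - s a_0 \;\leq\; \abs{r-z}\,\abs{p(z)} \;\leq\; r a_0 + s a_0,
\]
and dividing by $a_0\abs{r-z}$ (allowed since $z\neq r$) yields~\ref{enum:EnestromKakeya:Quantitative:Combined:Inner}.

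\textbf{Outer estimate.} I pass to the reversed polynomial
\[
	p^*(w) = w^n p(1/w) = a_n + a_{n-1}w + \ldots + a_0 w^n .
\]
Its coefficients increase, so the monotone hypothesis fails and the inner estimate cannot be applied to $p^*$ directly. Instead I rerun the same computation with $(z-R)\,p(z)$, using $a_{\nu-1} \leq R a_\nu$. This gives $(z-R)\,p(z) = a_n z^{n+1} - h(z)$ with
\[
	h(z) = \sum_{\nu=1}^n (R a_\nu - a_{\nu-1}) z^\nu + R a_0 .
\]
The coefficients of $h$ are nonnegative, and $h(R) = R a_n R^n$ by the identity at $z=R$. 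For $\abs{z} = s \geq R$, the function $h(s)/s^{n}$ is a combination of nonpositive powers of $s$ with nonnegative coefficients, hence nonincreasing in $s$. Its value at $s=R$ is $R a_n$, so $\abs{h(z)} \leq h(s) \leq R a_n s^n$. Combining with $\abs{a_n z^{n+1}} = a_n s^{n+1}$ gives
\[
	a_n s^n (s-R) \leq \abs{z-R}\,\abs{p(z)} \leq a_n s^n (s+R),
\]
which is~\ref{enum:EnestromKakeya:Quantitative:Combined:Outer}.

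\textbf{Main obstacle.} The only real subtlety is seeing that one must compare $g(s)/s$ (respectively $h(s)/s^n$) rather than $g(s)$ itself, so as to gain the factor $\abs{z}/r$ (respectively $R/\abs{z}$). Beyond that, the remaining work is checking the sign conventions in the telescoping identities.
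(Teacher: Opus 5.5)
Your proof is correct and is essentially the paper's argument. The paper substitutes $\zeta=z/r$ (respectively $\zeta=z/R$), bounds $|\zeta|^k\le|\zeta|$ (respectively $|\zeta|^k\le|\zeta|^n$), and then telescopes the coefficients of $(1-\zeta)p(r\zeta)-a_0$ (respectively $(1-\zeta)p(R\zeta)+a_nR^n\zeta^{n+1}$); this is exactly your monotonicity of $g(s)/s$ and $h(s)/s^n$, just normalised. One aside is wrong, though harmless: you claim that $p(\abs{z})\geq a_0$ ``fails in general'', but it holds trivially because all coefficients are positive, so the route you abandoned would also have worked; your final argument does not depend on it.
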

\begin{proof}
	(We follow \cite[pp.~12--13]{borwein1995} with minimal adjustments in order to extract the desired inequalities.)
	For $\zeta\in\CC$ with $\abs{\zeta}\leq1$ we have
	\begingroup%
	\allowdisplaybreaks%
	\begin{equation}
		\label{eq:EnestromKakeya}
		\begin{aligned}
			\abs{ (1-\zeta) p(r\zeta) - a_0 } &
			= \abs{ (ra_1-a_0)\zeta+\ldots+(ra_n-a_{n-1})r^{n-1}\zeta^n-a_nr^n\zeta^{n+1} } \\ &
			\leq (a_0-ra_1)\abs{\zeta}+\ldots+(a_{n-1}-ra_n)r^{n-1}\abs{\zeta}^n+a_nr^n\abs{\zeta}^{n+1} \\ &
			\leq \braces{ (a_0-ra_1)+\ldots+(a_{n-1}-ra_n)r^{n-1}+a_nr^n } \abs{\zeta} \\ &
			= a_0 \abs{\zeta}.
		\end{aligned}
	\end{equation}
	\endgroup%
	Then $\zeta = z/r \: (\neq1)$ satisfies $\abs{\zeta}\leq1$ and we find that
	\[
		\abs{ (1-\zeta) p(r\zeta) } \geq a_0 (1-\abs{\zeta})
		\quad\text{and}\quad
		\abs{ (1-\zeta) p(r\zeta) } \leq a_0 (1+\abs{\zeta}).
	\]
	This shows~\zcref{enum:EnestromKakeya:Quantitative:Combined:Inner}.
	
	For~\zcref{enum:EnestromKakeya:Quantitative:Combined:Outer} one considers $\zeta=z/R$ for $\abs{z}\geq R$ and estimates
	\[
		\abs{(1-\zeta)p(R\zeta)+a_n(R\zeta)^n\zeta}
	\]
	similarly as in~\zcref{eq:EnestromKakeya}.
\end{proof}

\subsection{Growth of the Taylor coefficients of Landau's extremal functions\texorpdfstring{ $f_{n,d}$}{}}

Recall that our proof of \zcref{thm:Quantitative}, given in~\zcref{sec:thm:Quantitative:Proof}, crucially hinges on a certain `upgrade' of \zcref{lem:LandauExtremizer:PowerSeriesExpansion:CoeffGrowth} in which the dependence of the bound~\zcref{eq:LandauExtremizer:PowerSeriesExpansion:CoeffGrowth} on $n$ is made explicit (see~\zcref{eq:LandauExtremizer:PowerSeriesExpansion:CoeffGrowth}).
We are now ready to prove this explicit version:

\begin{prop}\label{lem:LandauExtremizer:PowerSeriesExpansion:CoeffGrowth:Explicit}
	Assume the hypotheses of \zcref{lem:LandauExtremizer:PowerSeriesExpansion:CoeffGrowth}.
	Then
	\begin{equation}
		\label{eq:LandauExtremizer:PowerSeriesExpansion:CoeffGrowth:Explicit}
		\abs{b_{n,\nu}} \ll n^2 \parentheses*{1+\frac{1}{4n}}^{-\nu}.
	\end{equation}
\end{prop}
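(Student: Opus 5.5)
We will bound the coefficients $b_{n,\nu}$ with Cauchy's estimate on a circle of radius $\rho = 1+\frac{1}{4n}$:
\[
	\abs{b_{n,\nu}} \leq \rho^{-\nu} \max_{\abs{z}=\rho} \abs{f_n(z)}.
\]
This choice of $\rho$ is admissible by \zcref{cor:LandauExtremizer:PoleBound}, because the poles of $f_n$ lie in $\abs{\zeta}\geq 1+\frac{1}{2n+1}$, and this exceeds $\rho$ for $n\geq1$. Everything therefore reduces to the estimate $\max_{\abs{z}=\rho}\abs{f_n(z)} \ll n^2$. We write $f_n(z) = z^n P_n(1/z)/P_n(z)$ and bound the numerator from above and the denominator from below separately.

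To apply \zcref{lem:EnestromKakeya:Quantitative}, set $p(X) = P_n(-X) = \sum_{\nu\le n} c_\nu X^\nu$ with $c_\nu = \abs{\binom{-1/2}{\nu}}$. These coefficients are positive and decreasing, with $c_0=1$. The ratios are $c_\nu/c_{\nu+1} = \frac{2\nu+2}{2\nu+1}$, so
\[
	r = \min Q = \frac{2n}{2n-1} = 1+\frac{1}{2n-1}, \qquad R = \max Q = 2.
\]
For $\abs{z}=\rho<r$, the lower bound in part~\zcref{enum:EnestromKakeya:Quantitative:Combined:Inner} gives
\[
	\abs{P_n(z)} = \abs{p(-z)} \geq \frac{r-\rho}{\abs{r+z}} \geq \frac{r-\rho}{r+\rho}.
\]
Here $r-\rho = \frac{1}{2n-1}-\frac{1}{4n} \geq \frac{1}{4n}$, so $\abs{P_n(z)} \gg 1/n$. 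This gives one factor of $n$.

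For the numerator, note that $\abs{1/z} = 1/\rho < 1$. The trivial bound then gives $\abs{P_n(1/z)} \leq \sum_\nu c_\nu \ll \sqrt{n}$, since $c_\nu \asymp \nu^{-1/2}$. Also $\abs{z^n} = \rho^n \leq e^{1/4}$. Hence $\abs{f_n(z)} \ll n^{3/2} \leq n^2$ on $\abs{z}=\rho$. If one prefers to avoid the asymptotics of $c_\nu$, one can use part~\zcref{enum:EnestromKakeya:Quantitative:Combined:Inner} at the point $-1/z$ instead: it gives $\abs{P_n(1/z)} \leq \frac{r+1/\rho}{\abs{r+1/z}} \leq \frac{r+1}{r-1} \ll n$. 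This yields the stated bound $n^2$ directly. Combining the two estimates with Cauchy's estimate gives~\zcref{eq:LandauExtremizer:PowerSeriesExpansion:CoeffGrowth:Explicit}.

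The main point requiring care is the lower bound for $\abs{P_n}$ on a circle lying strictly outside the unit disk but inside the zero-free region. This is exactly where the quantitative form of Eneström--Kakeya is needed rather than the qualitative \zcref{lem:EnestromKakeya}: the gap $r-\rho$ must be tracked explicitly, and choosing $\rho$ at roughly half the distance to $r$ keeps this gap of size $\asymp 1/n$. Beyond that, only the elementary arithmetic $\frac{1}{2n-1}-\frac{1}{4n}\geq\frac{1}{4n}$ and $\rho^n\leq e^{1/4}$ need checking.
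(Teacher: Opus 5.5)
Your proof is correct and follows the paper's own route: Cauchy's estimate on $\abs{z}=\rho=1+\frac{1}{4n}$, with the quantitative Eneström--Kakeya theorem giving $\abs{P_n(z)}\gg 1/n$ and $\abs{P_n(1/z)}\ll n$, plus $\rho^n\le e^{1/4}$. You supply the details that the paper leaves as ``follows easily''.

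One harmless slip: since $\binom{-1/2}{\nu}(-1)^\nu=\abs{\binom{-1/2}{\nu}}$, the polynomial $P_n$ itself already has the positive decreasing coefficients $c_\nu$. So $p=P_n$, not $P_n(-X)$. Nothing changes, because all your bounds are taken over the full circle and are symmetric under $z\mapsto -z$.
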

\begin{proof}
	Recall that, by \zcref{cor:LandauExtremizer:PoleBound}, $f_n$ is holomorphic in the open disk with radius $r = 1 + 1/(2n+1)$ centred about the origin.
	Consequently, for any $0 < \rho < r$, Cauchy's estimate for Taylor coefficients yields
	\[
		\abs{b_{n,\nu}}
		\leq \rho^{-\nu} \max_{\abs{z}=\rho} \abs{f_n(z)}
		= \rho^{n-\nu} \max_{\abs{z}=\rho} \abs*{\frac{P_n(1/z)}{P_n(z)}}.
	\]
	We take $\rho = 1 + 1/4n$ and estimate this further using \zcref{lem:EnestromKakeya:Quantitative}.
	Then~\zcref{eq:LandauExtremizer:PowerSeriesExpansion:CoeffGrowth:Explicit} follows easily.
\end{proof}

\subsection{Remark on the zeros of \texorpdfstring{$P_n$}{Pn}}
\label{subsec:RemarksOnZeros}

As we have seen, information on the location of the roots of $P_n$ is vital to our proof of \zcref{thm:Quantitative}.
Precisely, we should like to have good lower bounds on the absolute value of the roots of $P_n$.
Let
\[
	r(n) = \min\set{ \abs{z} }[ \text{roots \(z\) of \(P_n\)} ].
\]
Then \zcref{cor:LandauExtremizer:PoleBound} tells us that $r(n) - 1 \gg 1/n$.
Towards the other extreme, we can give the following upper bound:
\begin{prop}
	We have
	\(\displaystyle
		r(n) \leq 1 + O\parentheses*{\frac{\log n}{n}}
	\)
	for $n\geq 2$.
\end{prop}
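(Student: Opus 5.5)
Since $P_n$ has positive real coefficients ($\binom{-1/2}{\nu}(-1)^\nu>0$) and $P_n(0)=1$, the product of the moduli of its $n$ roots equals $1/c_n$, where $c_n=\abs{\binom{-1/2}{n}}$ is the leading coefficient. Hence
\[
	r(n)^n \leq \prod_{\text{roots } z} \abs{z} = \frac{1}{c_n}.
\]
By Stirling's formula (or by the elementary bounds on the central binomial coefficient), $c_n = \binom{2n}{n}4^{-n} \asymp n^{-1/2}$, so $1/c_n \ll \sqrt{n}$. Taking $n$-th roots gives
\[
	r(n) \leq (C\sqrt{n})^{1/n} = \exp\parentheses*{\frac{\tfrac12\log n + \log C}{n}} = 1 + O\parentheses*{\frac{\log n}{n}}
\]
for $n\geq2$. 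Here we use $e^x\leq 1+2x$ for $0\leq x\leq1$, and for small $n$ the statement holds trivially by enlarging the implied constant.

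The plan is therefore short. First, record that the leading coefficient of $P_n$ is $c_n=\abs{\binom{-1/2}{n}}=\frac{1\cdot3\cdots(2n-1)}{2\cdot4\cdots 2n}$, using~\zcref{eq:BinomialCoefficient}. Second, apply Vieta's formula: the product of the roots has modulus $P_n(0)/c_n = 1/c_n$. Third, bound $1/c_n$. Only a polynomial bound in $n$ is needed, and an elementary one suffices. Writing $c_n=\prod_{k=1}^n(1-\frac1{2k})$ and using $\log(1-x)\geq -x-x^2$ for $0\leq x\leq 1/2$ gives $c_n \geq \exp(-\tfrac12 H_n - O(1)) \gg n^{-1/2}$. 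The crude bound $c_n\geq 1/(2n)$ would already do, and it follows from $c_n^2\geq \prod_{k}\frac{(2k-1)}{2k}\cdot\frac{(2k-2)}{2k-1}$ telescoping (with a trivial adjustment for $k=1$).

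The only step needing any care is this lower bound for $c_n$. Its only role is to make $\log(1/c_n)=O(\log n)$, so it poses no real obstacle. The minimum root modulus is then at most the geometric mean of the root moduli, which yields the claim.
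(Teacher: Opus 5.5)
Your proof is correct and follows the paper's argument. Both compare $1 = \abs{P_n(0)} = \abs{\binom{-1/2}{n}}\prod_z \abs{z} \geq \abs{\binom{-1/2}{n}}\, r(n)^n$ with the lower bound $\abs{\binom{-1/2}{n}} \gg n^{-1/2}$; the paper gets this bound from Stirling's formula, and your elementary product estimates serve equally well. Your closing step via $e^x \leq 1+2x$ on $[0,1]$ is in fact more careful than the paper's: its stated inequality $\exp(\log n/n) < 1+\log n/n$ points the wrong way, although the $O$-conclusion is unaffected.
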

\begin{proof}
	(The proof is based on an observation we have learned from work by Erd\H{o}s and Tur\'an~\cite[§\,6]{erdos1950}.)
	Observe that
	\[
		1
		= \abs{P_n(0)}
		= \abs*{\binom{-1/2}{n}} \mkern 2mu \prod_z {\abs{0-z}}
		\geq \abs*{\binom{-1/2}{n}} \mkern 2mu r(n)^n,
	\]
	where $z$ ranges over all $n$ roots of $P_n$ (taking multiplicity into account\footnote{%
		As observed subsequently in \zcref{rem:Roots}, the statement about multiplicities is unnecessary.
	}).
	The absolute value of the binomial coefficient above is asymptotic to $1/\sqrt{\pi n}$ (by Stirling's formula; \emph{cf}.~\cite[pp.~22--23]{landau1916}).
	In particular, we have the inequality $r(n)^n \leq 2 \sqrt{\pi n} < n$ for all sufficiently large $n > 4\pi$.
	Hence, for those $n$,
	\[
		r(n)
		< \exp\parentheses*{\frac{\log n}{n}}
		< 1 + \frac{\log n}{n}.
	\]
	In particular, we have the bound claimed in the proposition for \emph{all} $n\geq 2$.
\end{proof}

Consequently, for $n\geq2$, we have
\begin{equation}\label{eq:RootBounds}
	\frac{1}{n} \ll r(n) - 1 \ll \frac{\log n}{n}.
\end{equation}
Based on some limited numerical evidence, one may be tempted to conjecture that the lower bound is closer to the truth; we have been unable to resolve this.
\begin{opprob}
	Improve on either of the two bounds in~\zcref{eq:RootBounds} to the extent that one of the following two statements follows:
	\[
		\limsup_{n\to\infty} (r(n)-1)n = \infty,
		\quad
		\liminf_{n\to\infty} \frac{(r(n)-1)n}{\log n} = 0.
	\]
	Ideally, establish an asymptotic formula for $(r(n)-1)n$ as $n\to\infty$.
\end{opprob}

\begin{rem}\label{rem:Roots}
	We mention two other interesting facts about the roots of $P_n$:
	\item
	\begin{enumerate}
		\item they are all simple, as can be seen by applying the Eneström--Kakeya theorem to the derivative of $P_n$;
		\item every point on the unit circle is an accumulation point of zeros of $P_n$ for $n$ restricted to a suitable subsequence of the positive integers; this follows from a classical theorem of Jentzsch~\cite{jentzsch1917} on sections of power series with finite radius of convergence (see also the subsequent influential refinements by Szegö~\cite{szego1922} and Erd\H{o}s--Tur\'an~\cite{erdos1950}, as well as the monography~\cite{andrievskii2002}).
	\end{enumerate}
\end{rem}

\section{Interpolation with roots of unity}
\label{sec:Lagrange}

This section is devoted to the proof of both \zcref{thm:Newman:Gen} and \zcref{thm:Newman:Gen:Variant}.
We handle both simultaneously.
Throughout this section fix a positive integer $n$ and let
\begin{equation}
	\label{eq:SetOmega}
	\Omega = \set{ \omega\in\CC }[ \omega^n\pm1 = 0 ].
\end{equation}
In the sequel, $\omega$, $\varpi$ and $\xi$ will always denote elements of $\Omega$.
Occurences of `$\pm$' or `$\mp$' always refer to the sign chosen in~\zcref{eq:SetOmega}.

\subsection{Lagrange interpolators for roots of unity}
Write
\begin{equation}
	\label{eq:LagrangeInterpolators}
	L_{\Omega,\omega} = \prod_{\substack{ \varpi\in\Omega \\ \varpi\neq\omega }} \frac{X-\varpi}{\omega-\varpi}
\end{equation}
for the \emph{Lagrange interpolating polynomial} with respect to $\Omega$ and $\omega$.
Clearly, for $\varpi\in\Omega$,
\begin{equation}
	\label{eq:LagrangeInterpolators:Eval}
	L_{\Omega,\omega}(\varpi) = \begin{cases}
		1 & \text{if } \varpi = \omega, \\
		0 & \text{if } \varpi \neq \omega. \\
	\end{cases}
\end{equation}

The key observation is the following neat property of the Lagrange interpolators with respect to our special choice for $\Omega$:
\begin{prop}\label{prop:NewmanLagrangeInterpol}
	\(\displaystyle
		\sum_{\omega\in\Omega} \abs{L_{\Omega,\omega}(z)}^2 = 1
	\)
	for all $z\in\CC$ with $\abs{z}=1$.
\end{prop}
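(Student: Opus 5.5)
First I will make the Lagrange interpolators explicit. Put $c=\mp1$, so that $\Omega$ is the zero set of $X^n-c$ and $\omega^n=c$ for every $\omega\in\Omega$. The product of the factors $X-\varpi$ over $\varpi\neq\omega$ equals $(X^n-c)/(X-\omega)$, and its value at $\omega$ is $n\omega^{n-1}=nc/\omega$. Therefore
\[
	L_{\Omega,\omega}(X)
	= \frac{\omega}{nc}\cdot\frac{X^n-c}{X-\omega}
	= \frac{1}{n}\sum_{k=0}^{n-1} \parentheses*{\frac{X}{\omega}}^{k},
\]
where the second form follows from expanding the geometric sum and using $\omega^n=c$.

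Next, for $\abs{z}=1$ I will expand $\abs{L_{\Omega,\omega}(z)}^2$ with this formula. Since $\abs{\omega}=1$, we have $\overline{(z/\omega)^l}=(\omega/z)^l$, and so
\[
	\sum_{\omega\in\Omega}\abs{L_{\Omega,\omega}(z)}^2
	= \frac{1}{n^2}\sum_{k,l=0}^{n-1} z^{k-l}\sum_{\omega\in\Omega}\omega^{l-k}.
\]
The inner sum is a character sum over $\Omega$. Writing $\Omega=\omega_0\mu_n$, with $\omega_0\in\Omega$ fixed and $\mu_n$ the group of $n$-th roots of unity, gives $\sum_{\omega\in\Omega}\omega^m=\omega_0^m\sum_{\zeta\in\mu_n}\zeta^m$. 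This vanishes unless $n\mid m$. For $\abs{l-k}<n$ that happens only when $k=l$, in which case the sum is $n$. Only the $n$ diagonal terms survive, so the total is $n\cdot n/n^2=1$.

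An alternative, equivalent route would be Parseval or orthogonality of the discrete Fourier transform: the matrix $(n^{-1/2}(z/\omega)^k)_{\omega,k}$ is unitary, which gives the same conclusion.

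There is no real obstacle here. The only points needing care are the evaluation of the derivative at $\omega$, which produces the normalisation $1/n$, and checking the orthogonality relation for the coset $\omega_0\mu_n$ in the case $\omega^n=-1$.
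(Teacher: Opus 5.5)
Your proof is correct, but it takes a genuinely different route from the paper's.

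\textbf{The paper's route.} It keeps $L_{\Omega,\omega}$ in the factored form $\pm\frac{X^n\pm1}{\omega-X}\frac{\omega}{n}$ and uses $\overline{z}=1/z$ to reduce the claim to evaluating $\sum_{\omega\in\Omega}\frac{\omega}{(z-\omega)^2}$. It evaluates this sum through the partial-fraction identity $\sum_{\omega}\frac{\omega}{(X-\omega)^2}=\mp\frac{n^2X^{n-1}}{(X^n\pm1)^2}$. That identity is proved by matching values and first derivatives of a polynomial of degree $<2n$ at the $n$ nodes. This requires the second-derivative identity in part~(2) of Lemma~\ref{lem:RootComputation}, and $z\in\Omega$ must be treated separately.

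\textbf{Your route.} You expand $L_{\Omega,\omega}$ as the geometric sum $\frac1n\sum_{k<n}(X/\omega)^k$. The proposition then becomes Parseval's identity for the discrete Fourier transform on the coset $\omega_0\mu_n$. Equivalently, it is a direct consequence of the orthogonality relations, which the paper proves later anyway (Lemma~\ref{lem:OrthogonalityRelations}).

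\textbf{What each buys.} Your argument is shorter, dispenses with Lemma~\ref{lem:PartialFractionDecomp} and the second-derivative computation, and needs no special care at the nodes. It also gives more. If you keep $\overline{z}^{\,l}$ instead of replacing it by $z^{-l}$, the same computation yields $\sum_{\omega\in\Omega}\abs{L_{\Omega,\omega}(z)}^2=\frac1n\sum_{k=0}^{n-1}\abs{z}^{2k}$ for every $z\in\CC$. So the bound $\le 1$ on the closed disk, used in Corollary~\ref{cor:FunctionalNormComputation}, is immediate without the maximum principle. Your viewpoint also shows why these node sets work: their Vandermonde matrix is $\sqrt{n}$ times a unitary matrix. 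That bears on the characterisation question raised in the remarks after the proposition. The paper's route follows Newman's original Lemma~2. It instead yields an explicit identity in $\CC(X)$, expressed through the node polynomial $X^n\pm1$ and its derivatives at the nodes.
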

\begin{rems*}
	\begin{enumerate}
		\item \zcref{prop:NewmanLagrangeInterpol} for `$+$' in the definition of $\Omega$ is precisely Lemma~2 of~\cite{newman1978}.
		\item \zcref{prop:NewmanLagrangeInterpol} is the crucial ingredient for obtaining \zcref{cor:FunctionalNormComputation} below.
		The latter result is, as we shall see, a common generalisation of \zcref{thm:Newman:Gen} and \zcref{thm:Newman:Gen:Variant}.
		\item The statement of \zcref{prop:NewmanLagrangeInterpol} may become invalid if $\Omega$ from~\zcref{eq:SetOmega} is replaced by some other set, even if that set consists only of roots of unity.
		For instance, for $\Omega = \set{1,\I}$, one easily checks that $2 \sum_{\omega\in\Omega} \abs{L_{\Omega,\omega}(z)}^2 = \abs{z-1}^2+\abs{z-\I}^2$.
		It would seem desirable to have a characterisation of the sets $\Omega$ for which \zcref{prop:NewmanLagrangeInterpol} holds.
	\end{enumerate}
\end{rems*}

We base the proof of \zcref{prop:NewmanLagrangeInterpol} on three lemmas.

\begin{lem}
	\label{lem:RootComputation}
	\begin{enumerate*}[label={\normalfont(\arabic*)}, ref={(\arabic*)}]
		\item\label{enum:RootComputation:1}
		\(\displaystyle
			\mp\frac{n}{\omega} = \prod_{\substack{ \varpi\in\Omega \\ \varpi\neq\omega }} (\omega-\varpi)
		\), \,
		\item
		\(\displaystyle
			\mp\frac{(n-1)n}{\omega^2} = 2 \adjustlimits \sum_{\substack{ \xi\in\Omega \\ \xi\neq\omega }} \prod_{\substack{ \vphantom{\xi} \varpi\in\Omega \\ \varpi\neq\omega,\xi }} (\omega-\varpi)
		\).
	\end{enumerate*}
\end{lem}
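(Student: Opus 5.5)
Both identities come from differentiating the polynomial $F(X)=X^n\pm1=\prod_{\varpi\in\Omega}(X-\varpi)$ and evaluating at a root $\omega\in\Omega$. Since $\omega$ is a simple root, we can write $F(X)=(X-\omega)G(X)$ with
\[
	G(X)=\prod_{\substack{\varpi\in\Omega\\ \varpi\neq\omega}}(X-\varpi).
\]
Two facts are used throughout: $\omega^n=\mp1$, and $\Omega$ has exactly $n$ distinct elements, because $F'=nX^{n-1}$ has no common root with $F$.

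For \zcref{enum:RootComputation:1}, I would differentiate $F=(X-\omega)G$ once, giving $F'=G+(X-\omega)G'$. Setting $X=\omega$ yields $G(\omega)=F'(\omega)=n\omega^{n-1}=n\omega^n/\omega=\mp n/\omega$. Since $G(\omega)$ is exactly the product on the right-hand side, this proves the first identity.

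For the second identity, I would differentiate once more: $F''=2G'+(X-\omega)G''$, so $2G'(\omega)=F''(\omega)=n(n-1)\omega^{n-2}=\mp(n-1)n/\omega^2$. On the other hand, the product rule applied to $G$ gives
\[
	G'(X)=\sum_{\substack{\xi\in\Omega\\ \xi\neq\omega}}\ \prod_{\substack{\varpi\in\Omega\\ \varpi\neq\omega,\xi}}(X-\varpi).
\]
Evaluating at $X=\omega$ produces exactly the double sum–product in the statement, and multiplying by $2$ finishes the proof.

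There is no real obstacle here. The only points needing care are the sign bookkeeping, via $\omega^n=\mp1$ relative to the sign in \zcref{eq:SetOmega}, and the observation that the roots are simple, so that the factorisation $F=(X-\omega)G$ with $G$ as above is legitimate.
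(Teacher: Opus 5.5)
Your proof is correct and matches the paper's: both identities come from differentiating $X^n\pm1=\prod_{\varpi\in\Omega}(X-\varpi)$ once and twice, evaluating at $\omega$, and using $\omega^n=\mp1$. Your factorisation $F=(X-\omega)G$ spells out the product-rule step behind the factor $2$ in $F''(\omega)=2G'(\omega)$, which the paper leaves to the reader for the second part.
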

\begin{proof}
	The first statement is certainly classical (see, \emph{e.g.}, \cite[VI, §~9, Problem~74]{polya1971}); it follows from
	\[
		\mp\frac{n}{\omega}
		= n\omega^{n-1}
		= \frac{\mathrm{d}}{\dd{X}} (X^n\pm1) \evalbar_{X=\omega}
		= \frac{\mathrm{d}}{\dd{X}} \prod_{\varpi\in\Omega} (X-\varpi) \evalbar_{X=\omega}
		= \prod_{\substack{ \varpi\in\Omega \\ \varpi\neq\omega }} (X-\varpi) \evalbar_{X=\omega}.
	\]
	Lisewise, the second statement follows from a similar evaluation of
	the second derivative of $X^n\pm1$ at $\omega\in\Omega$.
\end{proof}

\begin{lem}
	\label{lem:LagrangeInterpolators}
	\(\displaystyle
		L_{\Omega,\omega} = \pm \frac{X^n\pm1}{\omega-X} \frac{\omega}{n}
	\)
	in $\CC(X)$.
\end{lem}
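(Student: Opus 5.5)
The plan is to compute the numerator and the denominator of the definition \zcref{eq:LagrangeInterpolators} separately, which \zcref{lem:RootComputation} makes immediate. The polynomial $X^n\pm1$ is monic of degree $n$ and its roots are exactly the $n$ distinct elements of $\Omega$. Hence
\[
	X^n\pm1 = \prod_{\varpi\in\Omega} (X-\varpi),
\]
and dividing by the single factor $X-\omega$ in $\CC(X)$ gives the numerator:
\[
	\prod_{\substack{ \varpi\in\Omega \\ \varpi\neq\omega }} (X-\varpi) = \frac{X^n\pm1}{X-\omega}.
\]

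For the denominator, \zcref{lem:RootComputation}\zcref{enum:RootComputation:1} gives directly
\[
	\prod_{\substack{ \varpi\in\Omega \\ \varpi\neq\omega }} (\omega-\varpi) = \mp\frac{n}{\omega}.
\]
This rests on $n\omega^{n-1} = n\omega^n/\omega = \mp n/\omega$, since $\omega^n = \mp1$.

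Dividing the two expressions yields
\[
	L_{\Omega,\omega} = \frac{X^n\pm1}{X-\omega}\cdot\frac{\omega}{\mp n} = \pm\frac{X^n\pm1}{\omega-X}\,\frac{\omega}{n}.
\]
The sign change $\mp\to\pm$ comes from flipping $X-\omega$ to $\omega-X$. This is the asserted identity in $\CC(X)$.

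The only real obstacle is sign bookkeeping. The two $\pm$ symbols in the statement are of different natures: the one inside $X^n\pm1$ is the defining sign of $\Omega$, whereas the leading $\pm$ is $-(\mp)$. These agree only because $-\mp = \pm$. I will double-check this against a small case such as $n=1$, $\Omega=\set{-1}$ (the $+$ case). There $L=1$, and the right-hand side equals $+\frac{X+1}{-1-X}\cdot\frac{-1}{1}=1$, as required.
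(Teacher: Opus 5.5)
Your proof is correct and is essentially the paper's argument: the numerator product is $(X^n\pm1)/(X-\omega)$ by the factorisation of $X^n\pm1$ over $\Omega$, the denominator product is $\mp n/\omega$ by part~(1) of the preceding lemma, and what remains is the sign bookkeeping ($1/(\mp1)=\mp1$ and $X-\omega=-(\omega-X)$), which you carried out correctly. Your numerator $(X^n\pm1)/(X-\omega)$ is the right one; the paper's proof writes $X^n-1$ there, which is a typo in the `$+$' case.
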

\begin{proof}
	The product over the denominators in~\zcref{eq:LagrangeInterpolators} is $\mp\omega/n$ by \zcref{lem:RootComputation}~\zcref{enum:RootComputation:1}.
	On the other hand, clearly, the product over the numerators equals $(X^n-1)/(X-\omega)$.
	This establishes the lemma.
\end{proof}

\begin{lem}
	\label{lem:PartialFractionDecomp}
	\(\displaystyle
		\sum_{\omega\in\Omega} \frac{\omega}{(X-\omega)^2}
		= \mp \frac{n^2 X^{n-1}}{(X^n\pm1)^2}
	\)
	in $\CC(X)$.
\end{lem}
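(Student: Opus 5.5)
The identity is a partial fraction statement about $Q = X^n\pm1$, whose roots are exactly the $n$ simple roots $\omega\in\Omega$. The plan is to start from the logarithmic derivative
\[
	\frac{Q'}{Q} = \frac{nX^{n-1}}{X^n\pm1} = \sum_{\omega\in\Omega} \frac{1}{X-\omega},
\]
which is valid in $\CC(X)$ because $Q$ is monic with these simple roots. Differentiating once gives
\[
	\sum_{\omega\in\Omega} \frac{1}{(X-\omega)^2}
	= -\frac{\mathrm{d}}{\dd{X}} \frac{nX^{n-1}}{X^n\pm1}
	= \frac{n^2X^{2n-2} - n(n-1)X^{n-2}(X^n\pm1)}{(X^n\pm1)^2}.
\]
This sum, however, lacks the weight $\omega$ in the numerators, so something further is needed.

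To insert the weight $\omega$, I would write $\omega = X - (X-\omega)$. Then
\[
	\sum_{\omega} \frac{\omega}{(X-\omega)^2} = X\sum_\omega \frac{1}{(X-\omega)^2} - \sum_\omega \frac{1}{X-\omega},
\]
and both sums on the right are already known from the previous step. Substituting them in yields
\[
	\frac{n^2X^{2n-1} - n(n-1)X^{n-1}(X^n\pm1) - nX^{n-1}(X^n\pm1)}{(X^n\pm1)^2}
	= \frac{n^2X^{2n-1} - n^2X^{n-1}(X^n\pm1)}{(X^n\pm1)^2}.
\]
The numerator collapses to $n^2X^{n-1}\bigl(X^n - (X^n\pm1)\bigr) = \mp n^2X^{n-1}$, which is exactly the claimed right-hand side.

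As an independent check, I would also compare principal parts. Near $\omega$, the right-hand side has a double pole with coefficient $\mp n^2\omega^{n-1}/(n\omega^{n-1})^2$. Using $\omega^n = \mp1$, this equals $\omega$. Both sides also vanish at infinity. This check would require verifying that the simple-pole residues vanish, which amounts to \zcref{lem:RootComputation}(2); the derivative route above avoids that step entirely.

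The only delicate point is sign bookkeeping with $\pm$ and $\mp$. In particular, the cancellation $X^n - (X^n\pm1) = \mp1$ must be handled carefully. Everything else is routine algebra.
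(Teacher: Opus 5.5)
Your proof is correct, and it takes a genuinely different route from the paper's.

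\textbf{The paper's route.} The paper clears denominators and reduces the claim to the polynomial identity $\sum_{\omega}\omega\prod_{\varpi\neq\omega}(X-\varpi)^2=\mp n^2X^{n-1}$. It then checks that both sides agree to first order at each of the $n$ nodes. The values come from part~(1) of \zcref{lem:RootComputation}, and the derivatives from both parts, part~(2) coming from the second derivative of $X^n\pm1$. The difference has degree $\le 2n-2$ yet is divisible by $(X^n\pm1)^2$, so it must vanish. This is a Hermite-interpolation argument: it fits the interpolation theme of the section and reuses a lemma the paper needs anyway.

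\textbf{Your route.} You differentiate the logarithmic derivative $Q'/Q=\sum_\omega (X-\omega)^{-1}$ and insert the weight via $\omega=X-(X-\omega)$. This reduces everything to two already-known sums and a one-line cancellation $X^n-(X^n\pm1)=\mp1$; I checked the algebra, including the signs. It needs neither part of \zcref{lem:RootComputation} nor any degree count, so it would make part~(2) of that lemma superfluous in the paper.

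\textbf{Your principal-part check.} As you note, this is essentially the paper's argument recast in Laurent-expansion language. The vanishing of the simple-pole residues there does not actually require part~(2). The right-hand side equals $\pm n\,\frac{\mathrm{d}}{\dd{X}}\bigl(X^n\pm1\bigr)^{-1}$, and derivatives of rational functions have zero residues.
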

\begin{proof}
	The claimed identity clearly follows from the polynomial identity
	\begin{equation}
		\label{eq:PartialFractionDecomp}
		\sum_{\omega\in\Omega} \omega \prod_{\substack{ \varpi\in\Omega \\ \varpi\neq\omega }} (X-\varpi)^2 = \mp n^2 X^{n-1}.
	\end{equation}
	upon dividing both sides by $(X^n\pm1)^2 = \prod_{\omega\in\Omega} (X-\omega)^2$.\\
	In order to establish~\zcref{eq:PartialFractionDecomp}, observe that both sides equal one another for all $\omega\in\Omega$, by virtue of~\zcref{lem:RootComputation}~\zcref{enum:RootComputation:1}.
	Furthermore, also the derivatives of both sides of~\zcref{eq:PartialFractionDecomp} are equal for all $\omega\in\Omega$: indeed, we have
	\begin{align*}
		\frac{\mathrm{d}}{\dd{X}} \sum_{w\in\Omega} w \prod_{\substack{ \varpi\in\Omega \\ \varpi\neq w }} (X-\varpi)^2 \evalbar_{X=\omega} 
		= \omega \sum_{\substack{ \xi\in\Omega \\ \xi\neq\omega }} 2 (\omega-\xi) \prod_{\substack{ \varpi\in\Omega \\ \varpi\neq \omega,\xi }} (\omega-\varpi)^2 
		= \frac{(n-1)n^2}{\omega^2},
	\end{align*}
	where the last equality follows from (both parts of) \zcref{lem:RootComputation}.
	Hence, the difference of both sides of~\zcref{eq:PartialFractionDecomp} either vanishes or is a polynomial of degree $\geq 2n-1$.
	Evidently, the latter is not the case, so we have~\zcref{eq:PartialFractionDecomp}.
\end{proof}

\begin{proof}[Proof of \zcref{prop:NewmanLagrangeInterpol}]
	Clearly, we may suppose that $z\notin\Omega$ (by~\zcref{eq:LagrangeInterpolators:Eval} or by continuity, for instance).
	Since $\abs{z}=1$, it holds that $\overline{z} = 1/z$.
	Thus, by \zcref{lem:LagrangeInterpolators},
	\[
		\sum_{\omega\in\Omega} \abs{L_{\Omega,\omega}(z)}^2
		= \sum_{\omega\in\Omega} \frac{z^n\pm1}{\omega-z} \frac{\omega}{n} \frac{z^{-n}\pm1}{\omega^{-1}-z^{-1}} \frac{\omega^{-1}}{n}
		= \mp \frac{(z^n\pm1)^2}{n^2 z^{n-1}} \sum_{\omega\in\Omega} \frac{\omega}{(z-\omega)^2}.
	\]
	The result is now immediate from \zcref{lem:PartialFractionDecomp}.
\end{proof}

\subsection{Application to functionals: the key result}

In the sequel, we shall write $\Ev{\omega}$ for the \emph{evaluation-at-$\omega$ functional} $\CC[X]\to\CC$, mapping every polynomial $f$ to $f(\omega)$.
\zcref{prop:NewmanLagrangeInterpol} now affords a proof of the following result which turns out to be nothing but a unified version of \zcref{thm:Newman:Gen} and \zcref{thm:Newman:Gen:Variant}:

\begin{cor}\label{cor:FunctionalNormComputation}
	Let $n$ and $\Omega$ be as above and let $d \geq 2n-1$ be an arbitrary integer.
	Let $\ell\colon \CC[X]^{<d} \to \CC$ be a functional of the form
	\begin{equation}
		\label{eq:FunctionalPointEvalDecomp}
		\ell = \sum_{\omega\in\Omega} u_\omega \cdot \Ev{\omega}\rvert_{\CC[X]^{<d}}
	\end{equation}
	with scalars $u_\omega\in\CC$.
	Then the norm
	\[
		\norm{\ell} = \sup\set{ \abs{\ell(f)} }[ f\in\CC[X]^{<d},\, \norm{f}_\infty = 1 ]
	\]
	of $\ell$ is given by
	\[
		\norm{\ell} = \sum_{\omega\in\Omega} \abs{ u_\omega }.
	\]
\end{cor}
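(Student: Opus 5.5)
The upper bound is immediate: for $f\in\CC[X]^{<d}$ with $\norm{f}_\infty=1$ we have $\abs{f(\omega)}\leq 1$ for every $\omega\in\Omega$, since $\abs{\omega}=1$. Hence
\[
\abs{\ell(f)}\leq\sum_{\omega\in\Omega}\abs{u_\omega}\abs{f(\omega)}\leq\sum_{\omega\in\Omega}\abs{u_\omega}.
\]
The work is in the reverse inequality, which requires exhibiting an extremal polynomial of degree $\leq 2n-2$ (hence in $\CC[X]^{<d}$ because $d\geq 2n-1$). This polynomial must have sup norm at most $1$ and satisfy $f(\omega)=\overline{\operatorname{sgn} u_\omega}$ for all $\omega\in\Omega$. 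Here $\operatorname{sgn} u$ means $u/\abs{u}$, and we pick an arbitrary unimodular value when $u_\omega=0$.

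Such an $f$ gives $\ell(f)=\sum_\omega u_\omega f(\omega)=\sum_\omega\abs{u_\omega}$, as required.

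To build $f$, let $\eta_\omega$ denote the target unimodular values. Following Newman, I would take
\[
f=\sum_{\omega\in\Omega}\eta_\omega L_{\Omega,\omega}^2.
\]
Each $L_{\Omega,\omega}$ has degree $n-1$, so $f$ has degree $\leq 2n-2$. By \zcref{eq:LagrangeInterpolators:Eval} we get $f(\varpi)=\eta_\varpi$ for every $\varpi\in\Omega$.

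On the unit circle,
\[
\abs{f(z)}\leq\sum_\omega\abs{L_{\Omega,\omega}(z)}^2=1
\]
by \zcref{prop:NewmanLagrangeInterpol}. The maximum modulus principle then gives $\norm{f}_\infty\leq 1$ on all of $\DD$. Since $\abs{f(\omega)}=1$, in fact $\norm{f}_\infty=1$, so $f$ is admissible in the supremum defining $\norm{\ell}$. This yields $\norm{\ell}\geq\sum_\omega\abs{u_\omega}$.

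The only genuinely nontrivial ingredient is the identity $\sum_\omega\abs{L_{\Omega,\omega}}^2=1$ on $\abs{z}=1$, which \zcref{prop:NewmanLagrangeInterpol} already supplies. What remains is bookkeeping:

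\begin{itemize}
\item using squares of the interpolators, so that the degree is $2n-2$, rather than the interpolators themselves, for which no norm bound is available;
\item checking that $d\geq 2n-1$ is exactly what makes $f$ admissible;
\item noting that the supremum over $\norm{f}_\infty=1$ and over $\norm{f}_\infty\leq1$ coincide for a linear functional.
\end{itemize}
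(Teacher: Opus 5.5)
Your proposal is correct and matches the paper's proof: the trivial upper bound, then the extremal polynomial $\sum_{\omega\in\Omega}\eta_\omega L_{\Omega,\omega}^2$ of degree $\leq 2n-2$, whose sup norm is controlled by \zcref{prop:NewmanLagrangeInterpol}. Your two small refinements are sound and strictly tidier than the paper's wording: assigning an arbitrary unimodular $\eta_\omega$ when $u_\omega=0$, instead of omitting those terms, keeps $\norm{f}_\infty=1$ even when $\ell=0$, and your explicit appeal to the maximum modulus principle supplies a step the paper leaves implicit.
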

\begin{proof}
	Let $f\in\CC[X]^{<d}$ satisfy $\norm{f}_\infty = 1$.
	Certainly,
	\[
		\abs{\ell(f)}
		\leq \sum_{\omega\in\Omega} \abs{u_\omega} \abs{f(\omega)}
		\leq \sum_{\omega\in\Omega} \abs{u_\omega},
	\]
	so it suffices to produce an $f$ with $\norm{f}_\infty = 1$ for which the above inequalities hold as equalities.
	Such an $f$ is furnished by
	\[
		f = \sideset{}{^\star}\sum_{\omega\in\Omega} \frac{\overline{u_\omega}}{\abs{u_\omega}} L_{\Omega,\omega}^2,
	\]
	where $\sum^\star$ indicates that terms with $u_\omega = 0$ ought to be omitted.
	Indeed, since each $L_{\Omega,\omega}^2$ has degree $2n-2 < d$, the degree of $f$ is no higher than that.
	Moreover, $\norm{f}_\infty = 1$ by \zcref{prop:NewmanLagrangeInterpol}, and~\zcref{eq:LagrangeInterpolators:Eval} shows that
	\[
		\ell(f)
		= \sideset{}{^\star}\sum_{\omega\in\Omega} \frac{\overline{u_\omega}}{\abs{u_\omega}} u_\omega
		= \sum_{\omega\in\Omega} \abs{u_\omega}.
		\qedhere
	\]
\end{proof}

\begin{rem}
	In~\zcref{subsec:ShapiroExact} we have already noted that the idea of maximising certain functionals by relating them to point evaluation functionals is crucial in~\cite{shapiro1952,newman1978}.
	The interested reader is also referred to~\cite{rogosinski1962} for general reflections on such an approach or, \emph{e.g.}, \cite[Ch.~2.B]{rivlin1990} for further examples.
\end{rem}

\subsection{Application to functionals: cosmetic surgery}
\label{subsec:ApplicationToFunctionals:CosmeticSurgery}

The functionals $\ell$ to which \zcref{cor:FunctionalNormComputation} applies are given in~\zcref{eq:FunctionalPointEvalDecomp} in terms of point evaluation functionals.
For applications to~\zcref{eq:FunctionalAbsValue:General} we should rather like to describe these functionals in terms of their action on monomials.
This is a staight-forward exercise in linear algebra and is the object of the present subsection.

\begin{lem}\label{lem:FunctionalPointEvalDecomp:Criterion}
	A linear functional $\ell\colon\CC[X]^{<d}\to\CC$ is of the form~\zcref{eq:FunctionalPointEvalDecomp} if and only if $(u_\omega)_{\omega\in\Omega}$ satisfies the following system of linear equations:
	\begin{equation}
		\label{eq:LinearSystem}
		\sum_{\omega\in\Omega} u_\omega \omega^\kappa = \ell(X^\kappa),
		\quad0\leq\kappa<d.
	\end{equation}
\end{lem}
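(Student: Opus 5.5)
The plan is to use that a linear functional on $\CC[X]^{<d}$ is determined by its values on the monomial basis $1, X, \ldots, X^{d-1}$. Two linear functionals $\ell$ and $\ell' = \sum_{\omega\in\Omega} u_\omega \cdot \Ev{\omega}\rvert_{\CC[X]^{<d}}$ coincide if and only if $\ell(X^\kappa) = \ell'(X^\kappa)$ for all $0\leq\kappa<d$, since both sides are linear and the monomials span the space.

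For the forward direction, assume $\ell$ is of the form~\zcref{eq:FunctionalPointEvalDecomp}. Evaluating both sides at $f = X^\kappa$ for $0\leq\kappa<d$ gives $\ell(X^\kappa) = \sum_{\omega} u_\omega \omega^\kappa$, which is exactly~\zcref{eq:LinearSystem}.

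For the converse, assume $(u_\omega)$ satisfies~\zcref{eq:LinearSystem}. Define $\ell'$ by the right-hand side of~\zcref{eq:FunctionalPointEvalDecomp}. It is linear, since each point evaluation is, and $\ell'(X^\kappa) = \sum_\omega u_\omega\omega^\kappa = \ell(X^\kappa)$ for every $\kappa<d$. Writing an arbitrary $f = \sum_{\kappa<d} a_\kappa X^\kappa$ and using linearity then gives $\ell(f) = \ell'(f)$, so $\ell$ has the required form.

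There is no real obstacle here. The only point to keep straight is that the statement concerns existence of scalars $u_\omega$ solving the system; uniqueness is not claimed and is not needed. Note that for $d > 2n$, or indeed $d > \abs{\Omega} = n$, the system is overdetermined, which is exactly why only special functionals qualify.
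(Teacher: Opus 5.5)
Your argument is correct and is essentially the paper's proof. Both rest on two facts: a functional on $\CC[X]^{<d}$ is determined by its values on the monomials $X^\kappa$, and $\sum_{\omega\in\Omega} u_\omega\,\omega^\kappa$ is the value of the point-evaluation combination at $X^\kappa$. The paper only repackages this in dual-basis language: it writes $\ell=\sum_\kappa \ell(X^\kappa)\,\tilde{x}_\kappa^*$ and identifies the matrix $(\omega_\nu^\kappa)$ of the map $T\colon V^*\to W^*$, $x_\nu^*\mapsto$ evaluation at $\omega_\nu$, via a commutative diagram.
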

\begin{proof}
	Consider the spaces $V = \CC[X]^{<n}$ and $W = \CC[X]^{<d}$.
	On $V$ pick the basis $\set{X^0,X^1,\ldots,X^{n-1}}$ and consider its dual basis $\set{x_0^*,x_1^*,\ldots,x_{n-1}^*} \subset V^*$ (\emph{i.e.}, $x_\nu^*(X^\eta) = 1$ if $\nu=\eta$ and $=0$ otherwise).
	Construct a basis $\set{\tilde{x}_0^*,x_1^*,\ldots,\tilde{x}_{d-1}^*} \subset W^*$ in the same fashion.
	Now put the elements of $\Omega$ in any order, $\Omega = \set{\omega_0,\ldots,\omega_{n-1}}$, say, and consider the linear map $T\colon V^*\to W^*$ sending each $x_\nu^*$ to $\Ev{\omega_\nu}\rvert_W$.
	Observe that any $\ell\in W^*$ admits the decomposition
	\begin{equation}
		\label{eq:FunctionalDualBasisDecomp}
		\ell = \sum_{\kappa=0}^{d-1} \ell(X^\kappa) \mkern 2mu \tilde{x}_\kappa^*
	\end{equation}
	with respect to our chosen dual basis on $W^*$.
	From this we easily obtain a matrix representation of $T$; this is summed up in the following commutative diagram:
	\begin{equation}\label{eq:CommDiagram}
		\begin{tikzcd}[column sep=35mm, row sep=12mm, ampersand replacement=\&] 
			|[alias=Cn  ]| {\CC^n}       \&
			|[alias=CXn ]| {\CC[X]^{<n}} \&
			|[alias=V   ]| V^*           \\
			|[alias=Cd  ]| {\CC^d}       \&
			|[alias=CXd ]| {\CC[X]^{<d}} \&
			|[alias=W   ]| W^*           \mathrlap{.}
			\ar[from=Cn, to=CXn, "\sim"{sloped, inner sep=1pt}, "{ (0,\ldots,0,\underset{\mathclap{\nu}}{1},0,\ldots,0)\mapsto X^\nu }"']
			\ar[from=CXn, to=V, "\sim"{sloped, inner sep=1pt}, "{ X^\nu \mapsto x_\nu^* }"']
			\ar[from=Cd, to=CXd, "\sim"{sloped, inner sep=1pt}, "{ (0,\ldots,0,\underset{\mathclap{\kappa}}{1},0,\ldots,0)\mapsto X^\kappa }"']
			\ar[from=CXd, to=W, "\sim"{sloped, inner sep=1pt}, "{ X^\kappa \mapsto \tilde{x}_\kappa^* }"']
			\ar[from=Cn, to=Cd, "\vphantom{X^\nu}\smash{ \parentheses[\big]{\:\omega_\nu^\kappa\:}_{\substack{ \scriptscriptstyle 0\leq\kappa<d \\ \scriptscriptstyle 0\leq\nu<n \hfill }} }", hook]
			\ar[from=CXn, to=CXd, "{ X^\nu \mapsto \mathop{\smash{\sum\limits_{\kappa=0}^{d-1}}} \omega_\nu^\kappa X^\kappa }", hook]
			\ar[from=V, to=W, "T\strut"', "\strut{ x_\nu^* \mapsto \Ev{\omega_\nu}\rvert_W }"{overlay}, hook]
		\end{tikzcd}
	\end{equation}
	Now $\ell\in W^*$ belongs to the image of $T$ if and only if there is a decomposition of the shape~\zcref{eq:FunctionalPointEvalDecomp}.
	Upon recalling~\zcref{eq:FunctionalDualBasisDecomp} and~\zcref{eq:CommDiagram}, we deduce the asserted equivalency of~\zcref{eq:FunctionalPointEvalDecomp} with~\zcref{eq:LinearSystem}.
\end{proof}

Observe that the left hand side of~\zcref{eq:LinearSystem} is periodic:
\begin{enumerate}
	\item with period $n$ if `$-$' is chosen in the definition~\zcref{eq:SetOmega} of $\Omega$,
	\item with period $2n$ if `$+$' is chosen, and `almost' $n$-periodic in the sense that a negative sign is picked up after $n$ steps (due to $\omega^n=-1$ in that case).
\end{enumerate}
This explains the particular choices for $\boldsymbol{t}$ in \zcref{thm:Newman:Gen} and \zcref{thm:Newman:Gen:Variant}.
In order to solve~\zcref{eq:LinearSystem} explicitly, we shall make use of the following lemma:

\begin{lem}[Orthogonality relations]\label{lem:OrthogonalityRelations}
	Let $\nu$ be an integer. Then
	\[
		\sum_{\omega\in\Omega} \omega^\nu
		= {\begin{cases}
			\epsilon_{\Omega,\nu} n & \text{if } \nu    \equiv0\bmod n, \\
			0 & \text{if } \nu\not\equiv0\bmod n, \\
		\end{cases}}
	\]
	where $\epsilon_{\Omega,\nu}=1$ if the sign `$-$' is chosen in~\zcref{eq:SetOmega} and $\epsilon_{\Omega,\nu}=(-1)^{\nu/n}$ otherwise.
\end{lem}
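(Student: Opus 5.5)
The plan is to parametrise $\Omega$ explicitly and reduce the identity to a finite geometric sum. In the case `$-$' in~\zcref{eq:SetOmega}, $\Omega$ is the set of $n$-th roots of unity, $\omega = \zeta^j$ for $0\leq j<n$, where $\zeta = \exp(2\pi\I/n)$. In the case `$+$', $\Omega$ is the set of roots of $X^n=-1$. These are $\omega = \eta\zeta^j$ for $0\leq j<n$, where $\eta = \exp(\pi\I/n)$ satisfies $\eta^n=-1$. In both cases the $n$ elements are distinct, since $X^n\pm1$ has nonzero derivative $nX^{n-1}$ at each root, so the parametrisation exhausts $\Omega$.

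With this, the sum becomes $\sum_{\omega\in\Omega}\omega^\nu = c^\nu\sum_{j=0}^{n-1}(\zeta^\nu)^j$, where $c=1$ in the first case and $c=\eta$ in the second. We then split according to $\nu \bmod n$.

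\begin{itemize}
\item If $\nu\not\equiv0\bmod n$, then $\zeta^\nu\neq1$. The geometric sum equals $(\zeta^{\nu n}-1)/(\zeta^\nu-1)=0$, because $\zeta^{\nu n}=1$.
\item If $\nu\equiv0\bmod n$, then $\zeta^\nu=1$ and the inner sum equals $n$. The prefactor is $c^\nu=1$ in the `$-$' case. In the `$+$' case it is $\eta^\nu=(\eta^n)^{\nu/n}=(-1)^{\nu/n}$.
\end{itemize}

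This yields exactly the value $\epsilon_{\Omega,\nu}n$ claimed in the statement. Negative integers $\nu$ need no separate treatment, since all the steps above hold verbatim for them.

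There is no real obstacle here. The only point requiring care is to confirm that the parametrisation of the `$+$' case covers every root exactly once, which is the distinctness observation made at the start, and to track the sign $(-1)^{\nu/n}$ correctly.
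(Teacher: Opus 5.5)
Your proof is correct, but it takes a different route from the paper's. The paper simply cites the classical orthogonality relations for the `$-$' case. It then derives the `$+$' case without parametrising anything: the $2n$-th roots of unity split disjointly into the roots of $X^n-1$ and of $X^n+1$, so
\[
	\sum_{\omega^n=-1}\omega^\nu + n\,\delta_{n\mid\nu}
	= \sum_{\zeta^{2n}=1}\zeta^\nu
	= 2n\,\delta_{2n\mid\nu},
\]
and distinguishing $\nu/n$ even or odd yields $(-1)^{\nu/n}n$.

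You instead treat both cases uniformly. You write $\Omega$ as the rotated copy $c\cdot\{\zeta^j\}_{0\le j<n}$ of the $n$-th roots of unity and evaluate a single geometric sum, so the sign appears directly as $\eta^\nu=(\eta^n)^{\nu/n}$. Your route is self-contained, since you also prove the `$-$' case. It also works verbatim for $\{\omega^n=a\}$ with any $a\neq0$, giving $c^\nu n$ for $n\mid\nu$ where $c^n=a$. The paper's trick relies on the factorisation $X^{2n}-1=(X^n-1)(X^n+1)$ and is tailored to $\pm1$. In exchange, it reduces everything to the standard relations applied twice and never needs to choose a particular root $\eta$.

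One small correction. Your derivative argument only shows that $|\Omega|=n$; it does not by itself make $j\mapsto c\zeta^j$ a bijection onto $\Omega$. What does that is the following pair of facts:
\begin{itemize}
\item the values $c\zeta^j$ for $0\le j<n$ are pairwise distinct, because $\zeta$ is a primitive $n$-th root of unity;
\item $X^n\pm1$ has at most $n$ roots.
\end{itemize}
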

\begin{proof}
	When the sign `$-$' is chosen in~\zcref{eq:SetOmega}, then the lemma asserts nothing but the well-known \emph{orthogonality relations} for roots of unity (see, \emph{e.g.},~\cite[p.~3]{davenport2000}).
	On the other hand, if the sign `$+$' is chosen in~\zcref{eq:SetOmega}, we may conclude as follows:
	let $\delta_{\nu\equiv0\bmod n} = 1$ or $=0$ according to whether $\nu\equiv0\bmod n$ or not.
	Then, on appealing to the already established `$-$' case,
	\[
		\sum_{\omega\in\Omega} \omega^\nu
		+ n \mkern 2mu \delta_{\nu\equiv0\bmod n}
		= \sum_{\omega^n=-1} \omega^\nu
		+ \sum_{\zeta^n= 1} \zeta^\nu
		= \sum_{\zeta^{2n}=1} \zeta^\nu
		= 2n \mkern 2mu \delta_{\nu\equiv0\bmod2n}.
	\]
	This establishes the lemma.
\end{proof}

\begin{proof}[Proof of \zcref{thm:Newman:Gen} and \zcref{thm:Newman:Gen:Variant}]
	Depending on the sign chosen in~\zcref{eq:SetOmega}, define the linear functional $\ell\colon\CC[X]^{<d}\to\CC$ either via the sum on the left hand sides of~\zcref{eq:NewmanGeneralised:Plus} (\emph{case~`$+$\!'}) or of~\zcref{eq:NewmanGeneralised:Plus} (\emph{case~`$-$\!'}) respectively.
	Then, for $0\leq\nu<d$,
	\begin{equation}
		\label{eq:OurFunctional}
		\ell(X^\nu) = {\begin{cases}
			t_{\nu\bmod 2n} & \text{in case~`\(+\)',} \\
			t_{\nu\bmod  n} & \text{in case~`\(-\)'.} \\
		\end{cases}}
	\end{equation}
	Let $m$ be defined to be either $2n$ or $n$ depending on the sign chosen in~\zcref{eq:SetOmega}.
	We aim to apply \zcref{cor:FunctionalNormComputation} and, to this end, we contend that $\ell$ admits a decomposition of the form~\zcref{eq:FunctionalPointEvalDecomp}.
	By \zcref{lem:FunctionalPointEvalDecomp:Criterion} this means that we ought to solve the linear equations given in~\zcref{eq:LinearSystem}.
	In fact, we claim that the (unique) solution is given by
	\[
		u_\omega = \frac{1}{n} \sum_{\nu=0}^{n-1} \frac{t_{\nu\bmod m}}{\omega^\nu}
		\quad(\omega\in\Omega).
	\]
	Indeed, for $0\leq\kappa<d$, we have
	\[
		\sum_{\omega\in\Omega} u_\omega \omega^{\kappa}
		= \sum_{\omega\in\Omega} \frac{1}{n} \sum_{\nu=0}^{n-1} t_{\nu\bmod m} \omega^{\kappa-\nu}
		= \sum_{\nu=0}^{n-1} t_{\nu\bmod m} \frac{1}{n} \sum_{\omega\in\Omega} \omega^{\kappa-\nu}
		= \ell(X^{\kappa}),
	\]
	where the last equality follows from \zcref{lem:OrthogonalityRelations} and~\zcref{eq:OurFunctional}.
	This shows that we may apply \zcref{cor:FunctionalNormComputation}, and this completes the proof of the two theorems.
\end{proof}

\subsection{Proof of the corollaries}
\label{subsec:ProofOfCorollaries}

\begin{proof}[Proof of \zcref{cor:Application:CuteIneq}]
	We choose $\boldsymbol{t}^n \in \CC^n$ in \zcref{thm:Newman:Gen:Variant} as an appropriate standard unit vector.
	Then~\zcref{eq:NewmanGeneralised:Minus} yields\footnote{%
		If we had applied~\zcref{eq:NewmanGeneralised:Plus} instead, we would have obtained~\zcref{eq:Application:CuteIneq:Almost} with a `$-$' between the two fractions.
		This would equally well yield to the desired conclusion~\zcref{eq:Application:CuteIneq}.
	}
	\begin{equation}\label{eq:Application:CuteIneq:Almost}
		\abs*{ \frac{f^{(k)}(0)}{k!} + \frac{f^{(n+k)}(0)}{(n+k)!} }
		\leq \norm{f}_\infty.
	\end{equation}
	This is \emph{not quite}~\zcref{eq:Application:CuteIneq}.
	However, on replacing $f$ by $f(\eta X)$, where $\eta$ is some suitably chosen unimodular constant, we may arrange for both numbers $f^{(k)}(0)$ and $f^{(n+k)}(0)$ to have equal argument in the complex plane.
	Hence, \zcref{eq:Application:CuteIneq:Almost} implies~\zcref{eq:Application:CuteIneq}.
	The sharpness of~\zcref{eq:Application:CuteIneq} follows, because~\zcref{eq:Application:CuteIneq:Almost} is guaranteed to be sharp by \zcref{thm:Newman:Gen:Variant}.
\end{proof}

\begin{proof}[Proof of \zcref{cor:NewmanGeneralised:Combined}]
	Note that $D$ is always even.
	We let $n = D/2$ so that either $d = 2n$ or $d = 2n-1$ depending on the parity of $d$.
	In the latter case, define $t_d = t_{2n-1} = 0$.
	Let $\inner{\boldsymbol{t},\_}\colon\CC[X]^{<d}\to\CC$ denote the linear functional sending each $f = \sum_\nu a_\nu X^\nu \in\CC[X]^{<d}$ to $\inner{\boldsymbol{t},f} = t_0 a_0 + \ldots + t_{d-1} a_{d-1}$.
	We decompose $\boldsymbol{t}$ as follows:
	\begin{equation}
		\label{eq:NewmanGeneralised:Combined:Proof:tVectDecomp}
		\boldsymbol{t}
		= {\begin{pmatrix}
			t_0 \\ \vdots \\ t_{n-1} \\ t_n \\ \vdots \\ t_{d-1}
		\end{pmatrix}}
		= {\frac{1}{2} \begin{pmatrix}
			t_0+t_n \\ \vdots \\ t_{n-1}+t_{2n-1} \\ t_0+t_n \\ \vdots \\ t_{d-n-1}+t_{d-1}
		\end{pmatrix}}
		+ {\frac{1}{2} \begin{pmatrix}
			t_0-t_n \\ \vdots \\ t_{n-1}-t_{2n-1} \\ -(t_0-t_n) \\ \vdots \\ -(t_{d-n-1}-t_{d-1})
		\end{pmatrix}}
		\eqqcolon \boldsymbol{t}_{+} + \boldsymbol{t}_{-}.
	\end{equation}
	Hence,
	\begin{equation}
		\label{eq:NewmanGeneralised:Combined:Proof}
		\abs{ t_0 a_0 + \ldots + t_{d-1} a_{d-1} }
		= \abs{\inner{(\boldsymbol{t}_{+}+\boldsymbol{t}_{-}),f}}
		\leq \abs{\inner{\boldsymbol{t}_{+},f}} + \abs{\inner{\boldsymbol{t}_{-},f}}.
	\end{equation}
	The first term on the right hand side of \zcref{eq:NewmanGeneralised:Combined:Proof} can be estimated using \zcref{thm:Newman:Gen:Variant}; likewise, the second term can be estimated using \zcref{thm:Newman:Gen}.
	Specifically, we find that
	\[
		\abs{\inner{\boldsymbol{t}_{\pm},f}}
		\leq \frac{1}{n} \sum_{\omega^n=\pm1} \abs[\bigg]{ \sum_{\nu=0}^{n-1} \frac{1}{2} \frac{t_\nu \pm t_{\nu+n}}{\omega^\nu} }
		= \frac{1}{D} \sum_{\omega^n=\pm1} \abs[\bigg]{ \sum_{\nu=0}^{D-1} \frac{t_\nu}{\omega^\nu} }
		= \frac{1}{D} \sum_{\omega^n=\pm1} \abs[\bigg]{ \sum_{\nu=0}^{d-1} \frac{t_\nu}{\omega^\nu} },
	\]
	where the penultimate equation is clear since either $d=D$ (if $d$ is even) or $d=D-1$ and $t_d = 0$ (by definition).
	Upon recalling~\zcref{eq:NewmanGeneralised:Combined:Proof} and noting that
	\[
		\sum_{\omega^n=1} + \sum_{\omega^n=-1}
		= \sum_{\omega^D=1},
	\]
	we finally infer the estimate~\zcref{eq:NewmanGeneralised:Combined}.
\end{proof}

\section*{Acknowledgements}

This research was funded in whole by the \emph{Austrian Science Fund~(FWF)}, project \href{http://doi.org/10.55776/PAT4579123}{`\emph{Prime divisors of polynomials, spin chains, and non-residues}'} (grant doi: \href{http://doi.org/10.55776/PAT4579123}{\footnotesize 10.55776/PAT4579123}).
For open access purposes, the author has applied a \href{https://creativecommons.org/licenses/by/4.0/}{CC~BY public copyright license} to any author accepted manuscript version arising from this submission.
The author would like to thank the anonymous referee for useful criticism that has lead to an improved exposition.


\begin{thebibliography}{20}
	\bibitem{andrievskii2002}
	V.~V. {Andrievskii} and {\relax H.-P}.~{Blatt}.
	\newblock \href{https://doi.org/10.1007/978-1-4757-4999-1}{\emph{{Discrepancy of signed measures and polynomial approximation}}}.
	\newblock New York, NY: Springer, 2002.
	
	\bibitem{borwein1995}
	P.~{Borwein} and T.~{Erd{\'e}lyi}.
	\newblock \href{https://doi.org/10.1007/978-1-4612-0793-1}{\emph{{Polynomials and polynomial inequalities}}}.
	\newblock New York, NY: Springer, 1995.
	
	\bibitem{davenport2000}
	H.~{Davenport}.
	\newblock \href{https://doi.org/10.1007/978-1-4757-5927-3}{\emph{{Multiplicative number theory. Revised by Hugh L.~Montgomery}}}.
	\newblock New York, NY: Springer, 3rd~ed., 2000.
	
	\bibitem{enestrom1893}
	G.~{Enestr{\"o}m}.
	\newblock {Härledning af en allmän formel för antalet pensionärer, som vid en godtycklig tidpunkt förefinnas inom en sluten pensionskassa}.
	\newblock {Stockh.\ {\"O}fv.~{L}. 405--415}, 1893.
	
	\bibitem{erdos1950}
	P.~{Erd{\H{o}}s} and P.~{Tur{\'a}n}.
	\newblock \href{https://doi.org/10.2307/1969500}{{On the distribution of roots of polynomials}}.
	\newblock \emph{{Ann.\ Math.~(2)}}, 51:\penalty0 105--119, 1950.
	
	\bibitem{jentzsch1917}
	R.~{Jentzsch}.
	\newblock \href{https://doi.org/10.1007/BF02422945}{{Untersuchungen zur Theorie der Folgen analytischer Funktionen}}.
	\newblock \emph{{Acta Math.}}, 41:\penalty0 219--251, 1917.
	
	\bibitem{kakeya1912}
	S.~{Kakeya}.
	\newblock \href{https://doi.org/10.1016/0021-9045(79)90056-X}{{On the limits of the roots of an algebraic equation with positive coefficients}}.
	\newblock \emph{{T{\^o}hoku Math.~J.}}, 2:\penalty0 140--142, 1912.
	
	\bibitem{landau1913}
	E.~{Landau}.
	\newblock {Absch{\"a}tzung der Koeffizientensumme einer Potenzreihe. I, II}.
	\newblock \emph{{Arch.\ der Math.~u.\ Phys.~(3)}}, 21:\penalty0 42--50, 250--255, 1913.
	
	\bibitem{landau1916}
	E.~{Landau}.
	\newblock \href{https://doi.org/10.1007/978-3-642-91869-8}{\emph{{Darstellung und Begr{\"u}ndung einiger neuerer Ergebnisse der Funktionentheorie}}}.
	\newblock Berlin: Springer, 1916.
	
	\bibitem{markov1916}
	V.~A. {Markov}.
	\newblock \href{https://doi.org/10.1007/BF01456902}{\"Uber Polynome, die in einem gegebenen Intervalle m\"oglichst wenig von Null abweichen}.
	\newblock (Transl.\ by J.~Grossmann; with a preface by Serge Bernstein.)
	\newblock \emph{{Math.\ Ann.}}, 77:\penalty0 213--258, 1916.
	
	\bibitem{newman1978}
	D.~J. {Newman}.
	\newblock \href{https://doi.org/10.1016/0021-9045(78)90106-5}{{Polynomials with large partial sums}}.
	\newblock \emph{{J.~Approx.\ Theory}}, 23:\penalty0 187--190, 1978.
	\newblock \href{https://doi.org/10.1016/0021-9045(79)90056-X}{{\emph{Erratum}}} \emph{ibid}.~26:\penalty0 194, 1979.
	
	
	\bibitem{polya1971}
	G.~{P{\'o}lya} and G.~{Szeg\H{o}}.
	\newblock \href{https://doi.org/10.1007/978-3-642-61987-8}{\emph{{Aufgaben und Lehrs{\"a}tze aus der Analysis. 2.~Band: Funktionentheorie, Nullstellen, Polynome, Determinanten, Zahlentheorie}}}.
	\newblock Berlin: Springer, 4th~ed., 1971.
	
	\bibitem{rack1989}
	{\relax H.-J}.~{Rack}.
	\newblock \href{https://doi.org/10.1016/0021-9045(89)90124-X}{On polynomials with largest coefficient sums}.
	\newblock \emph{{J.~Approx.\ Theory}}, 56\penalty0 (3):\penalty0 348--359, 1989.
	
	\bibitem{reimer1967}
	M.~{Reimer} and K.~{Zeller}.
	\newblock \href{https://doi.org/10.1007/BF01123741}{Absch{\"a}tzung der Teilsummen reeller Polynome}.
	\newblock \emph{{Math.\ Z.}}, 99:\penalty0 101--104, 1967.
	
	\bibitem{rivlin1990}
	{\relax Th}.~J. {Rivlin}.
	\newblock \emph{{Chebyshev polynomials. From approximation theory to algebra and number theory}}.
	\newblock New York: John Wiley~\&\ Sons, Inc., 2nd~ed., 1990.
	
	\bibitem{rogosinski1962}
	W.~W. {Rogosinski}.
	\newblock \href{https://doi.org/10.7146/math.scand.a-10513}{{Linear functionals on bounded polynomials of given order}}.
	\newblock \emph{{Math.\ Scand.}}, 10:\penalty0 53--70, 1962.
	
	\bibitem{rudin1987}
	W.~{Rudin}.
	\newblock \emph{{Real and complex analysis}}.
	\newblock New York, NY: McGraw-Hill, 3rd~ed., 1987.
	
	\bibitem{shapiro1951}
	H.~S. {Shapiro}.
	\newblock \href{http://hdl.handle.net/1721.1/12198}{Extremal problems for polynomials and power series}.
	\newblock Master's thesis, Massachusetts Institute of Technology. Department of Mathematics, 1951.
	
	\bibitem{shapiro1952}
	H.~S. {Shapiro}.
	\newblock \emph{\href{https://dspace.mit.edu/handle/1721.1/12247}{Extremal problems for polynomials and power series}}.
	\newblock PhD thesis, Massachusetts Institute of Technology. Department of Mathematics, 1952.
	
	\bibitem{szego1922}
	G.~{Szeg{\"o}}.
	\newblock {\"Uber die Nullstellen von Polynomen, die in einem Kreise gleichm\"a{\ss}ig konvergieren}.
	\newblock \emph{{Sitzungsber.\ Berl.\ Math.\ Ges.}}, 21:\penalty0 59--64, 1922.
\end{thebibliography}


\vfill%
\end{document}